\documentclass[11pt]{amsart}

\usepackage[margin=1.05in]{geometry}
\usepackage{amsmath,amssymb,amsthm}
\usepackage{graphicx}
\usepackage{tikz-cd}
\usepackage{enumitem}
\usepackage{microtype}
\usepackage[hidelinks]{hyperref}

\newtheorem{theorem}{Theorem}[section]
\newtheorem{conjecture}[theorem]{Conjecture}
\newtheorem{proposition}[theorem]{Proposition}
\newtheorem{lemma}[theorem]{Lemma}
\newtheorem{corollary}[theorem]{Corollary}
\theoremstyle{remark}
\newtheorem{remark}[theorem]{Remark}

\newcommand{\Z}{\mathbb{Z}}
\newcommand{\R}{\mathbb{R}}
\newcommand{\Ftwo}{\mathbb{F}_2}
\newcommand{\Fol}{\mathcal{F}}
\newcommand{\Fhat}{\widehat{F}}
\newcommand{\psihat}{\widehat{\psi}}
\newcommand{\Folhat}{\widehat{\mathcal{F}}}
\newcommand{\Sigmahat}{\widehat{\Sigma}}
\DeclareMathOperator{\rank}{rank}
\DeclareMathOperator{\Hom}{Hom}
\DeclareMathOperator{\Int}{int}

\title[Lens Space Surgeries and the Bleiler--Litherland Conjecture]
{Lens Space Surgeries and the Bleiler--Litherland Conjecture}
\author{Qilong Guo}
\address{College of Science, China University of Petroleum-Beijing,
Beijing 102249, China}
\email{guoqilong1984@hotmail.com}
\subjclass[2020]{Primary 57K10; Secondary 57K32, 57R30}
\keywords{Lens space surgery, Bleiler--Litherland conjecture,
pseudo-Anosov monodromy, Alexander polynomial, essential lamination,
characterizing slope}
\date{}
\hypersetup{pdftitle={Lens Space Surgeries and the Bleiler--Litherland Conjecture},
  pdfauthor={Qilong Guo}}

\begin{document}

\begin{abstract}
We prove the Bleiler--Litherland conjecture: every lens space obtained by a
nontrivial Dehn surgery on a hyperbolic knot in $S^3$ has order at least $18$.
The key ingredient is a spectral obstruction: if a hyperbolic knot $K$ of
genus $g$ admits a lens space surgery with slope $\pm(4g-2)$, then
$\Delta_K(t)$ has a real root outside the unit circle. The proof combines
Floer-theoretic restrictions on lens space surgeries, Gabai's degeneracy-slope
bound and Gabai--Oertel's persistence theorem for essential laminations,
Ni's fixed-point theorem for monodromy, and a mod-$2$ orientability criterion,
together with the relation between homological monodromy and the Alexander
polynomial. As a further application of this spectral obstruction, we obtain
characterizing-slope results for torus knots.
\end{abstract}

\maketitle

\section{Introduction}

For a knot $K\subset S^3$ and coprime integers $p,q$ with $q\geq 0$, let
$S^3_{p/q}(K)$ denote the $3$-manifold obtained by $p/q$-surgery on $K$, with
respect to the meridian--longitude basis. When $q=0$, take $p=1$, so $1/0$
is the meridional slope. A surgery is called nontrivial if its slope is
nonmeridional. When $q>0$, the surgery is positive if $p>0$ and integer if
$q=1$. For coprime integers $r,s$ with $r>0$, let $L(r,s)=S^3_{r/s}(U)$,
where $U$ is the unknot. We call $r$ the order of $L(r,s)$. Thus the order
of a lens space is the cardinality of its first homology group. A lens space
surgery is a nontrivial surgery whose resulting manifold is a lens space.

Moser classified lens space surgeries on torus knots \cite{Moser}.
Fintushel and Stern constructed hyperbolic knots with lens space surgeries
\cite{FintushelStern}. Their examples led Bleiler and Litherland to the
following conjecture \cite{BleilerLitherland}.

\begin{conjecture}[Bleiler--Litherland]\label{conj:BL}
Every lens space obtained by a nontrivial surgery on a hyperbolic knot in
$S^3$ has order at least $18$.
\end{conjecture}

The Cyclic Surgery Theorem implies that every lens space surgery slope
on a hyperbolic knot is an integer \cite[Corollary~1]{CGLS}. Berge
introduced doubly primitive knots and conjectured that every knot in $S^3$
admitting a lens space surgery is doubly primitive \cite{Berge}.
Floer-theoretic work gives strong additional restrictions: a knot admitting
a positive lens space surgery is an L-space knot, hence is fibered, and
its Alexander polynomial is highly constrained \cite{KMOS,NiFibered,OS}.
Greene proved that if positive integer $p$-surgery on a knot $K\subset S^3$
is a lens space, then there is a knot $B$ from Berge's list such that
$S^3_p(B)\cong S^3_p(K)$ and
$\widehat{HFK}(B)\cong\widehat{HFK}(K)$
\cite[Theorem~1.3]{Greene}. Baker showed that any counterexample to
Conjecture~\ref{conj:BL} would, after mirroring if necessary, have to satisfy
\cite[Theorem~1.6 and the end of Section~1.3]{Baker}
\begin{equation}\label{eq:baker}
 S^3_{14}(K)\cong L(14,11),\qquad
 g(K)=4,\qquad \Delta_K(t)=\Delta_{T(3,5)}(t).
\end{equation}
Throughout, $\Delta_K(t)$ denotes the symmetrized Alexander polynomial
normalized by $\Delta_K(1)=1$. With this normalization, the latter polynomial is
\begin{equation}\label{eq:phi15}
 \Delta_{T(3,5)}(t)=t^4-t^3+t-1+t^{-1}-t^{-3}+t^{-4}
 =t^{-4}\Phi_{15}(t).
\end{equation}
We exclude this remaining possibility by means of the following spectral
obstruction.

\begin{theorem}\label{thm:spectral}
Let $K\subset S^3$ be a hyperbolic knot of genus $g$. Suppose that $K$
admits a lens space surgery with slope $\pm(4g-2)$. Then $\Delta_K(t)$
has a real root $\rho$ with $|\rho|>1$.
\end{theorem}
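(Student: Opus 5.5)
After mirroring, I would assume the slope is $4g-2>0$; mirroring preserves genus and hyperbolicity and replaces $\Delta_K(t)$ by $\Delta_K(t^{-1})$, which has the same real roots of modulus greater than $1$ up to inversion. Since $\Delta_K$ is symmetric, a real root $\rho$ with $|\rho|>1$ exists if and only if there is a real root of modulus less than $1$. By the Floer-theoretic results cited above \cite{KMOS,NiFibered,OS}, $K$ is an L-space knot. Hence $K$ is fibered with fiber $F$ of genus $g$, and the monodromy $\psi\colon F\to F$ is pseudo-Anosov because $K$ is hyperbolic. Moreover, $\Delta_K(t)$ is, up to units, the characteristic polynomial of $\psi_*$ acting on $H_1(F;\Z)\cong\Z^{2g}$.

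The plan is to relate the stretch factor $\lambda>1$ of $\psi$ to this characteristic polynomial. If the invariant (stable) foliation $\Fol$ of $\psi$ is transversely orientable, then $\lambda$ or $-\lambda$ is an eigenvalue of $\psi_*$ on $H_1(F;\R)$, and therefore $\Delta_K(\pm\lambda)=0$. This gives the desired root $\rho=\pm\lambda$. The whole proof therefore reduces to showing that $\Fol$ is orientable, that is, that its singularities have only even numbers of prongs and that the orientation double cover is trivial.

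To get orientability, I would use the suspension lamination. Suspending the stable lamination gives an essential lamination in the exterior $X$ of $K$, and its degeneracy slope $\delta$ on $\partial X$ is determined by the prong structure and rotation at the boundary. Gabai--Oertel persistence says the lamination stays essential after filling along any slope $\alpha$ with $\Delta(\alpha,\delta)\ge 2$. An essential lamination cannot exist in a lens space, because its universal cover would be $\R^3$, which a lens space does not have. So the slope $4g-2$ must satisfy $\Delta(4g-2,\delta)\le1$. On the other hand, Gabai's degeneracy-slope bound combined with Ni's fixed-point theorem pins down the boundary behaviour. For an L-space knot, the monodromy has no fixed points in the interior in the relevant sense, and the fractional Dehn twist coefficient is controlled. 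Together these say $\delta$ is of the form $n/1$, with the number of boundary prongs $n$ related to $4g-2$ via Euler characteristic. Indeed, $-\chi(F)=2g-1$ and the index sum gives boundary prong count at most $4g-2$, with equality exactly when there are no interior singularities.

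I therefore expect the surgery slope $4g-2$ to force the extremal configuration: the boundary has $4g-2$ prongs, there are no interior singular points (or only even-pronged ones), and the degeneracy slope is $4g-2$ itself. All prongs are then even, which is an even number of boundary prongs and no odd interior singularities. A mod-$2$ criterion then shows $\Fol$ is orientable: the obstruction class in $H^1(F;\Ftwo)$ vanishes because every loop encircles an even number of prongs and the class is $\psi$-invariant and compatible with the boundary. After orientation, $\psi$ either preserves or reverses the orientation, giving the eigenvalue $\pm\lambda$.

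The main obstacle is this middle step: deducing from the lens surgery at slope exactly $4g-2$, via persistence and the degeneracy bound, that the prong data is extremal. The first thing I would try is a case split on the fractional Dehn twist coefficient $c=k/n$. The case $c=0$ is ruled out using Ni's fixed-point theorem; for $c\neq0$, Gabai's bound $|\delta|\le\ldots$ combined with $\Delta(4g-2,\delta)\le1$ leaves only $n=4g-2$.
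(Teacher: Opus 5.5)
Your architecture matches the paper's: persistence, an extremal prong count, mod-$2$ orientability, then the dilatation as an eigenvalue. But the two steps that carry the weight are not established.

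\textbf{First gap: the case $d=4g-3$ is never excluded.} The claim that Gabai's bound together with $\Delta(4g-2,\delta)\le 1$ ``leaves only $n=4g-2$'' is false as stated. Gabai's bound gives $\delta=d/1$ with $d\le 4g-2$, and persistence gives $|d-(4g-2)|\le 1$, so $d\in\{4g-3,4g-2\}$. The value $d=4g-3$ is compatible with everything you invoke.

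Also, the degeneracy slope is not $n/1$ in general. Ni's reciprocal formula gives $c(h)=1/d$, while $c(h)=k/n$, so $n=kd$ with $k\ge 1$.

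The missing argument runs as follows, for $g\ge 2$:
\begin{enumerate}
\item If $d=4g-3$, then $n=k(4g-3)\le 4g-2$ forces $k=1$ and $n=4g-3$.
\item The prong formula $(n-2)+\sum(p_i-2)=4g-4$ then yields exactly one interior singularity, and it is $3$-pronged.
\item Being the unique interior singularity, it is fixed by $\psi$. This contradicts Ni's theorem that $\psi$ has no fixed points in $\Int F$.
\end{enumerate}
This is where Ni's theorem is indispensable, not in disposing of $c=0$. The paper gets $c(h)=1/d>0$ separately, from right-veering via $\tau(K)=g$. Only once $d=4g-2$ is secured does $n=kd\le 4g-2$ give $k=1$, $n=4g-2$, and no interior singularities.

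\textbf{Second gap: the mod-$2$ step does not explain why the obstruction class vanishes.} Even-prongedness, including at the capped point since $n=4g-2$ is even, only shows one thing. The class $w_1$ of the tangent line field on the complement of the singular set is the restriction of some $w\in H^1(\Fhat;\Ftwo)$, and naturality makes $w$ invariant under $\psihat^*$.

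Invariance alone does not force $w=0$. Foliations with only even-pronged singularities need not be orientable, and a nonzero $\psihat^*$-invariant class in $H^1(\Fhat;\Ftwo)$ can perfectly well exist.

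What kills $w$ is that $1$ is not an eigenvalue of $\psihat^*$ over $\Ftwo$. This follows from $\det(I-h_*)=\pm\Delta_K(1)=\pm1$ being odd, transported to $\Fhat$ via the isomorphism $H_1(F;\Ftwo)\cong H_1(\Fhat;\Ftwo)$ induced by collapsing $\partial F$. That isomorphism also lets you apply the closed-surface eigenvalue fact on $\Fhat$ and pull the eigenvalue back to $h_*$. Your phrase ``compatible with the boundary'' does not supply this. The input $\Delta_K(1)=1$ is essential and is absent from your argument.
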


The value $4g-2$ actually occurs: the hyperbolic pretzel knot
$P(-2,3,7)$ has genus $5$ and admits a lens space surgery with slope
$18=4g-2$ \cite{Baker,FintushelStern}. Baker proved that a genus $g$ knot
in $S^3$ admitting a lens space surgery of order at least $4g-1$ is a knot
in Berge's list \cite[Theorem~1.2]{Baker}. Thus $4g-2$ is the first order
below Baker's threshold. Our spectral obstruction yields the following
consequence for knots with torus-knot Alexander polynomial.

\begin{corollary}\label{cor:torus-polynomial}
Let $r,s>1$ be coprime integers, and let $K\subset S^3$ be a hyperbolic knot
such that $\Delta_K(t)=\Delta_{T(r,s)}(t)$. Then no integer slope $p$ with
$|p|=2(r-1)(s-1)-2$ is a lens space surgery slope for $K$.
\end{corollary}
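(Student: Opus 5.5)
The plan is to deduce Corollary~\ref{cor:torus-polynomial} directly from Theorem~\ref{thm:spectral}. The two ingredients are the genus of $K$ and the location of the roots of $\Delta_{T(r,s)}$.

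First, compute the genus of $K$. Suppose $p$ is a lens space surgery slope for $K$ with $|p|=2(r-1)(s-1)-2$. Since $K$ is hyperbolic, $p$ is an integer by the Cyclic Surgery Theorem \cite{CGLS}. After mirroring if necessary we may take $p>0$; mirroring preserves hyperbolicity and preserves the symmetrized Alexander polynomial. By \cite{KMOS,NiFibered,OS}, $K$ is then an L-space knot, hence fibered. For an L-space knot the genus equals the degree of the symmetrized Alexander polynomial. The polynomial $\Delta_{T(r,s)}$ has top degree $(r-1)(s-1)/2$, so
\[
g(K)=\frac{(r-1)(s-1)}{2},
\]
and therefore $4g(K)-2=2(r-1)(s-1)-2=|p|$. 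Thus $K$ admits a lens space surgery with slope $\pm(4g-2)$.

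Next, apply Theorem~\ref{thm:spectral}. It gives a real root $\rho$ of $\Delta_K(t)=\Delta_{T(r,s)}(t)$ with $|\rho|>1$. On the other hand,
\[
\Delta_{T(r,s)}(t)=t^{-(r-1)(s-1)/2}\,\frac{(t^{rs}-1)(t-1)}{(t^r-1)(t^s-1)}.
\]
This is, up to a unit, a product of cyclotomic polynomials $\Phi_d$ with $d\mid rs$, $d\nmid r$ and $d\nmid s$. All of its roots are therefore roots of unity and lie on the unit circle. This contradicts the existence of $\rho$, so no such slope $p$ exists.

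The only delicate point is the genus identification. It relies on the Floer-theoretic facts that L-space knots are fibered and that the genus equals $\deg\Delta_K$. Mirroring to make the slope positive needs to be stated with care, since these results are formulated for positive surgeries. The remaining steps are routine.
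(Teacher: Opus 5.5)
Your proposal is correct and follows the paper's proof essentially step for step. You mirror to make $p>0$, use the L-space/fibered property to get $g(K)=(r-1)(s-1)/2$ so that $|p|=4g(K)-2$, apply Theorem~\ref{thm:spectral}, and then contradict it because every root of $\Delta_{T(r,s)}$ is a root of unity. The differences are cosmetic: the paper gets the genus by noting that both $K$ and $T(r,s)$ are fibered, so equal Alexander polynomials give equal genera. Also, your appeal to the Cyclic Surgery Theorem is unnecessary, since $p$ is already assumed to be an integer.
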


This suggests that the spectral obstruction may be useful more broadly in
studying hyperbolic fibered knots having the same Alexander polynomial as a
torus knot. Combining Corollary~\ref{cor:torus-polynomial} with Baker's
result gives the following.

\begin{corollary}\label{cor:BL}
The Bleiler--Litherland conjecture holds.
\end{corollary}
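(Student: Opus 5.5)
The plan is to argue by contradiction: combine Baker's reduction \eqref{eq:baker} with Corollary~\ref{cor:torus-polynomial}, or equivalently with Theorem~\ref{thm:spectral} directly.

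Suppose $K\subset S^3$ is a hyperbolic knot and some nontrivial surgery on $K$ yields a lens space of order less than $18$. By the Cyclic Surgery Theorem the slope is an integer \cite[Corollary~1]{CGLS}. By Baker's result \cite[Theorem~1.6]{Baker}, after replacing $K$ by its mirror image if necessary, $K$ satisfies \eqref{eq:baker}:
\[
S^3_{14}(K)\cong L(14,11),\qquad g(K)=4,\qquad \Delta_K(t)=\Delta_{T(3,5)}(t).
\]
The mirror of a hyperbolic knot is hyperbolic, since the complements are homeomorphic. So the mirrored knot is still a hyperbolic knot, and it admits a lens space surgery with slope $14$.

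Now apply Corollary~\ref{cor:torus-polynomial} with $(r,s)=(3,5)$. We have $\Delta_K=\Delta_{T(3,5)}$, and
\[
2(r-1)(s-1)-2=2\cdot 2\cdot 4-2=14.
\]
The corollary therefore says that $14$ is not a lens space surgery slope for $K$. This contradicts $S^3_{14}(K)\cong L(14,11)$.

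As a sanity check, the same contradiction also follows directly from Theorem~\ref{thm:spectral}. Since $g=4$, the slope is $4g-2=14$, so the theorem would give a real root $\rho$ of $\Delta_K$ with $|\rho|>1$. However, by \eqref{eq:phi15} every root of $\Delta_K(t)=t^{-4}\Phi_{15}(t)$ is a primitive $15$th root of unity and lies on the unit circle.

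There is no real obstacle in this deduction; all the depth lies in Theorem~\ref{thm:spectral} and in Baker's reduction. The only points needing care are bookkeeping. First, mirroring must preserve both hyperbolicity and the lens-space-surgery hypothesis. Second, Baker's reduction must cover every order below $18$ and all slopes, positive or negative. Both of these are supplied by the cited statement of Baker, together with the $\pm$ in Theorem~\ref{thm:spectral} and the $|p|$ in Corollary~\ref{cor:torus-polynomial}.
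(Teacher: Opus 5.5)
Your proposal is correct and follows the paper's proof: assume a counterexample, use Baker's reduction to get $S^3_{14}(K)\cong L(14,11)$ with $g(K)=4$ and $\Delta_K=\Delta_{T(3,5)}$ after mirroring, and then note that $14=2(3-1)(5-1)-2$ is excluded by Corollary~\ref{cor:torus-polynomial}. Your direct check through Theorem~\ref{thm:spectral} and the cyclotomic factor $\Phi_{15}$ is the same argument with the corollary unpacked.
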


The spectral obstruction also has consequences for characterizing slopes.
Recall that a slope $p/q$ is characterizing for a knot $J$ if, for every
knot $K\subset S^3$, an orientation-preserving homeomorphism
$S^3_{p/q}(K)\cong S^3_{p/q}(J)$ implies that $K$ is isotopic to $J$.

\begin{corollary}\label{cor:characterizing}
The slopes $11$, $14$, and $22$ are characterizing for $T(3,4)$,
$T(3,5)$, and $T(3,7)$, respectively.
\end{corollary}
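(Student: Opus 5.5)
The plan is a trichotomy: torus, satellite or hyperbolic. Fix $J\in\{T(3,4),T(3,5),T(3,7)\}$ and let $p=11,14,22$ respectively. Suppose $K\subset S^3$ satisfies $S^3_p(K)\cong S^3_p(J)$ by an orientation-preserving homeomorphism. Since $p=rs\pm 1$ with $(r,s)=(3,4),(3,5),(3,7)$, Moser's classification \cite{Moser} shows that $S^3_p(J)$ is a lens space. Hence $p$ is a positive lens space surgery slope for $K$.

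\textbf{Step 1: $K$ and $J$ share an Alexander polynomial and a genus.} I will use the Floer-theoretic input quoted in the introduction. A knot with a positive lens space surgery is an L-space knot, hence fibered \cite{KMOS,NiFibered,OS}. For an L-space knot with $p\ge 2g-1$, the Alexander polynomial is recovered from the correction terms of the oriented manifold $S^3_p(K)$. Equivalently, Greene's theorem \cite[Theorem~1.3]{Greene} identifies $\widehat{HFK}(K)$ with $\widehat{HFK}(B)$ for a Berge knot $B$ having the same oriented $p$-surgery. I expect $p\ge 2g(J)-1$ to be needed for this recovery to be clean.
\begin{itemize}[label={}]
\item The relevant genera are $g(J)=3,4,6$.
\item The slopes $p=11,14,22$ are far beyond $2g-1$, and in fact equal $4g-1$, $4g-2$, $4g-2$.
\end{itemize}
Therefore $\Delta_K=\Delta_J$ and $g(K)=g(J)$. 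This uses the orientation-preserving hypothesis, which is exactly what makes the correction terms match.

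\textbf{Step 2: the non-hyperbolic cases.}
\begin{itemize}[label={}]
\item \emph{Torus knots.} A torus knot is determined by its Alexander polynomial, so if $K$ is a torus knot then $K=J$. A positive slope rules out the mirror.
\item \emph{Satellites.} If $K$ is a satellite with a lens space surgery, then by the known classification it is a cable $C_{m,n}(T(a,b))$ with $n=mab\pm1$, and the slope is $mn$.
\end{itemize}
I will rule out each slope by arithmetic on the condition $mn=p$ with $m\ge 2$.
\begin{itemize}[label={}]
\item $p=11$ is prime, so $m=11$ and $n=1$, but $n=mab\pm1$ is far larger than $1$.
\item $p=14$: $m=7,n=2$ fails. $m=2,n=7$ forces $ab\in\{3,4\}$, which has no coprime solution with $a,b\ge 2$.
\item $p=22$: $m=11,n=2$ fails. $m=2,n=11$ forces $ab\in\{5,6\}$, and only $ab=6$ works, giving $C_{2,11}(T(2,3))$. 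This cable has genus $2\cdot1+\tfrac{(2-1)(11-1)}{2}=7\neq 6$, contradicting Step 1.
\end{itemize}

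\textbf{Step 3: the hyperbolic case.} Suppose $K$ is hyperbolic.
\begin{itemize}[label={}]
\item For $p=11$ and $p=14$, the lens space has order less than $18$, contradicting Corollary~\ref{cor:BL}.
\item For $p=22$, we have $\Delta_K=\Delta_{T(3,7)}$ and $22=2(3-1)(7-1)-2$. Corollary~\ref{cor:torus-polynomial} then forbids $22$ as a lens space slope, again a contradiction.
\end{itemize}
Equivalently, Theorem~\ref{thm:spectral} would force a real root of modulus greater than $1$. But $\Delta_{T(3,7)}$ is a product of cyclotomic polynomials, so all its roots lie on the unit circle.

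Hence $K=J$ in every case. The step I expect to need the most care is Step 1: justifying that the oriented lens space $S^3_p(K)$ determines $\Delta_K$, and hence the genus. The rest is bookkeeping with the earlier results.
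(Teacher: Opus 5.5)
Your overall structure matches the paper's: a torus/satellite/hyperbolic trichotomy, Corollary~\ref{cor:BL} for hyperbolic $K$ at $p=11,14$, and the spectral obstruction at $p=22$ once $\Delta_K=\Delta_{T(3,7)}$. Two steps, however, do not hold up as written.

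\textbf{Step 2 misquotes the satellite classification.} For a cable $C_{m,n}(C)$ with winding number $m$, the slope $mn$ is the cabling-annulus slope. Surgery there gives $S^3_{mn}(C_{m,n}(C))\cong L(m,n)\# S^3_{n/m}(C)$, which is reducible and hence never a lens space. The actual result (Theorem~\ref{thm:satellite}) forces $m=2$, $n=2ab\pm1$, and lens space slope $2n\mp1=4ab\pm1$. For example, your candidate $C_{2,11}(T(2,3))$ has lens space slope $23$, not $22$. So your case analysis is solving the wrong equation. With the correct statement the step is immediate and needs no genus input: the slope must be odd and at least $23$, which rules out $11$, $14$, and $22$ at once. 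This is exactly what the paper does.

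\textbf{Step 1 carries the torus case and the case $p=22$, but it is unproved.} Greene's theorem is not ``equivalent'' to what you need. It gives $\widehat{HFK}(K)\cong\widehat{HFK}(B)$ for \emph{some} Berge knot $B$ with $S^3_p(B)\cong S^3_p(J)$, and you must still identify $B$. The paper does this at $p=22$ using Ichihara--Saito's table, which gives $B=T(7,3)$.

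The $d$-invariant route can be completed for these three slopes, but it needs two ingredients you did not supply.
\begin{itemize}
\item The genus bound that matters is $p\ge 2g(K)-1$, and this is automatic because $S^3_p(K)$ is an L-space. A bound on $g(J)$ is beside the point.
\item More importantly, the homeomorphism need not respect the surgery labelings of $\mathrm{Spin}^c$ structures. It acts by $i\mapsto ui+c$, where $u^2\equiv1\pmod p$ (the linking form is preserved) and $2c\equiv0$ (it commutes with conjugation). For $p=11,14,22$ one checks that $u=\pm1$. For even $p$, the shift $c=p/2$ must also be excluded. Matching $d(L(p,1),0)-2V_0(K)=\frac{p-1}{4}-2V_0(K)$ against $d(L(p,1),p/2)-2V_{p/2}(J)=-\frac14$ would force $V_0(K)=p/8\notin\Z$, so the shift is impossible.
\end{itemize}
Only after these checks does $d(S^3_p(K),i)=d(L(p,1),i)-2\max(V_i,V_{p-i})$ yield $V_i(K)=V_i(J)$ for all $i$. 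That gives $\Delta_K=\Delta_J$ and $g(K)=g(J)$.

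The paper avoids Step 1 entirely for $p=11$ and $p=14$. At $p=11$, Moser's theorem leaves only $T(2,5)$ and $T(3,4)$, and $S^3_{11}(T(2,5))\cong L(11,4)\not\cong L(11,9)$. At $p=14$, only $T(3,5)$ survives.
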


The paper is organized as follows. Section~2 collects the background needed
later, including L-space knots and homological monodromy, pseudo-Anosov
dynamics, stable laminations under Dehn filling, and lens space surgeries on
torus and satellite knots. In Section~3 we isolate the mod-$2$ orientability
argument needed to detect the pseudo-Anosov dilatation on homology.
Section~4 combines these ingredients to prove the spectral obstruction and
the Bleiler--Litherland conjecture. Section~5 then applies the obstruction
to characterizing slopes for torus knots.

\section{Preliminaries}

We recall the background used in the proof of the main theorem and in the
applications.

\subsection{L-space knots and monodromy}
Recall that an L-space is a rational homology sphere $Y$ satisfying
$\rank\widehat{HF}(Y)=|H_1(Y)|$. A knot $K\subset S^3$ is an L-space
knot if some positive surgery on $K$ is an L-space. Lens spaces are
L-spaces \cite{OS}; hence a knot admitting a positive lens space surgery
is an L-space knot. Ni's fibered-knot detection theorem then gives the
following.

\begin{proposition}\label{prop:fibered}
If a knot $K\subset S^3$ admits a positive lens space surgery, then $K$
is fibered \cite[Corollary~1.3]{NiFibered}.
\end{proposition}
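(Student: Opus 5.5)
The plan is not to reprove this from scratch. Proposition~\ref{prop:fibered} is quoted from Ni, so the argument reduces it to the published results it rests on, in three short steps. We take as given that Heegaard Floer homology detects fiberedness: Ni's theorem that if $K\subset S^3$ has genus $g$ and $\rank \widehat{HFK}(K,g)=1$, then $K$ is fibered.

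First, let $S^3_{p/q}(K)$ be a lens space with $p/q>0$. Since lens spaces are L-spaces \cite{OS}, $S^3_{p/q}(K)$ is an L-space, so $K$ is an L-space knot. If one wants an integer positive L-space surgery, we use the fact that L-space knots admit positive L-space surgeries along all slopes $\geq 2g(K)-1$. Alternatively, we note that in the hyperbolic case the Cyclic Surgery Theorem \cite{CGLS} already forces the slope to be integral.

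Second, we invoke Ozsv\'ath--Szab\'o's structure theorem for knots with positive L-space surgeries. It says that $\widehat{HFK}(K)$ is determined by $\Delta_K(t)$, that $\Delta_K(t)=\sum_{i=0}^{2n}(-1)^i t^{n_i}$ with strictly decreasing exponents, and that each $\widehat{HFK}(K,j)$ has rank at most one. Combined with genus detection, $\max\{j:\widehat{HFK}(K,j)\neq 0\}=g(K)$, this gives $\rank \widehat{HFK}(K,g)=1$. Ni's theorem then yields that $K$ is fibered.

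The step needing the most care is the first: checking that the Ozsv\'ath--Szab\'o structure theorem applies to a lens space obtained by possibly nonintegral positive surgery. This is handled by their rational surgery formula, or more simply by integrality of the slope in the hyperbolic setting that is the only case used later. Everything else is a direct citation. We also remark that for a negative lens space surgery, one mirrors $K$: the mirror has a positive lens space surgery and is fibered if and only if $K$ is. This is how the result will be applied to slopes $\pm(4g-2)$ in Theorem~\ref{thm:spectral}.
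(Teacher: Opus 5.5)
Your argument is correct and is essentially the paper's route: the paper only notes that lens spaces are L-spaces and then cites Ni's Corollary~1.3, and your chain is exactly how that corollary is proved. That chain is Ozsv\'ath--Szab\'o's structure theorem, which gives $\widehat{HFK}(K,g)\cong\mathbb{Z}$ once combined with genus detection, followed by Ni's monic-detection theorem. Your reduction from a rational positive slope to an integer one, using the fact that L-space surgeries occur for all slopes $\geq 2g(K)-1$, correctly handles the one technical point the citation hides.
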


For hyperbolic L-space knots we use the following consequence of Ni's
fixed-point theorem \cite[Theorem~1.1 and the paragraph following it]{NiFixed};
only the absence of interior fixed points will be needed below.

\begin{theorem}[Ni]\label{thm:Ni}
Let $K\subset S^3$ be a hyperbolic L-space knot with fiber $F$. Then the
monodromy is freely isotopic to a pseudo-Anosov homeomorphism
$\psi:F\to F$ having no fixed points in $\Int F$.
\end{theorem}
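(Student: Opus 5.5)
The plan has two parts: first identify the monodromy as pseudo-Anosov, then rule out interior fixed points using knot Floer homology in the next-to-top Alexander grading.

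\emph{Pseudo-Anosov monodromy.} An L-space knot is fibered, by Proposition~\ref{prop:fibered} (or Ni's detection theorem applied to any positive L-space surgery). So the exterior $S^3\setminus N(K)$ is the mapping torus of a monodromy $\phi\colon F\to F$, where $F$ is the fiber. By the Nielsen--Thurston classification, $\phi$ is freely isotopic to a periodic, a reducible, or a pseudo-Anosov map. If $\phi$ were periodic, the exterior would be Seifert fibered. If $\phi$ were reducible, the suspension of an invariant family of essential curves would give an essential torus or Klein bottle in the exterior. Both cases contradict hyperbolicity of $K$. Hence $\phi$ is isotopic (freely, i.e.\ allowing the boundary to rotate) to a pseudo-Anosov map $\psi$. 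I would take $\psi$ to be the canonical Thurston representative on $\Int F$, extended to the boundary in the standard way with the prongs of the boundary singularity.

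\emph{Excluding interior fixed points.} I would pass through the identification of the next-to-top knot Floer group with symplectic Floer homology of the monodromy, due to Ghiggini--Spano and Ni. In a suitable form it says
\[
\widehat{HFK}(K,g-1)\cong HF^{\sharp}(\phi)\oplus(\text{a boundary contribution}),
\]
where $HF^{\sharp}(\phi)$ is fixed-point Floer homology computed with a boundary twist that pushes fixed points off $\partial F$. For an L-space knot the Alexander polynomial has the form $t^g-t^{g-1}+\cdots$ (Hedden--Watson), and $\widehat{HFK}$ has rank at most one in each grading. Hence $\widehat{HFK}(K,g-1)$ has rank exactly $1$, and I expect that single generator to be accounted for by the boundary term. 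This would force $HF^{\sharp}(\phi)=0$.

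To pass from the vanishing of this homology to the absence of fixed points, I would invoke Cotton-Clay's computation of Floer homology for pseudo-Anosov maps. For the canonical representative, every interior fixed point (a regular point or a singularity with fixed prongs) gives a generator, and the Floer differential vanishes on the chain level because distinct fixed points lie in distinct Nielsen classes. Thus the rank of $HF^{\sharp}(\phi)$ equals the number of interior fixed points of $\psi$, and that number is therefore zero.

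\emph{Main obstacle.} The delicate point is the boundary bookkeeping. I need to determine exactly which twist convention at $\partial F$ matches the Heegaard Floer identification. I also need to check that the boundary-parallel contribution accounts for precisely the rank-one group $\widehat{HFK}(K,g-1)$ and not just part of it. I would first verify both points on a torus-knot example before trusting them in general. Everything else consists of quoting the cited structural theorems.
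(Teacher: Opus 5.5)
The paper gives no argument for this statement; it simply quotes Ni's fixed-point theorem. Your outline is essentially the argument of that cited source, which uses the Baldwin--Hu--Sivek method: rank one of $\widehat{HFK}(K,g-1)$ from Ozsv\'ath--Szab\'o and Hedden--Watson, the Ghiggini--Spano identification with fixed-point Floer homology of the monodromy, and Cotton-Clay's computation for pseudo-Anosov maps. Your first step (Nielsen--Thurston plus hyperbolicity) is fine. However, as written the proof stops exactly where the content is. Nothing you say explains why the boundary term is nonzero. If it were zero, your own reasoning would produce exactly one interior fixed point, the opposite conclusion. So the step you call ``bookkeeping'' is a genuine gap.

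\textbf{The missing idea.} What governs the boundary term is the fractional Dehn twist coefficient $c(h)$. The monodromy $h$ is the identity on $\partial F$, while $\psi$ rotates the $n$ boundary prongs by $c(h)$. The generators coming from the collar where one interpolates between them depend on $c(h)$. The boundary term is not a fixed rank-one summand. For the figure-eight knot, $c(h)=0$ (it is amphichiral) and the Anosov monodromy has no interior fixed point, yet $\widehat{HFK}(4_1,0)$ has rank $3$.

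\textbf{What you need.}
\begin{enumerate}
\item[(i)] $0<c(h)<1$ for a hyperbolic L-space knot. Right-veering follows from $\tau(K)=g(K)$ via Hedden and Honda--Kazez--Mati\'c. The value $c(h)=1/d$ with $d\ge 2$ follows from Gabai's bound and Ni's reciprocal formula, exactly as in the proof of Proposition~\ref{prop:degeneracy}. Hence the boundary prongs rotate and $\psi$ has no fixed points near $\partial F$.
\item[(ii)] The Baldwin--Hu--Sivek collar analysis. It shows that in this range the collar contributes a single generator, lying in a Nielsen class different from every interior fixed point, so it is not cancelled by the differential.
\end{enumerate}
Only with both inputs does rank one force the interior contribution to vanish.

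This is also why Floer homology cannot be replaced by the Alexander polynomial. Write $a_{g-1}$ for the coefficient of $t^{g-1}$ in $\Delta_K$. By Proposition~\ref{prop:alexander} and the Lefschetz formula on $\Fhat$, when $c(h)\notin\Z$ the interior fixed points have total index $1-\operatorname{tr}h_*=a_{g-1}+1$. So $a_{g-1}=-1$ only says that these indices sum to zero.

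\textbf{A minor correction.} A fixed singularity whose prongs are permuted is also an interior fixed point and contributes a generator. A fixed $p$-pronged singularity with fixed prongs contributes $p-1$ generators. So the rank is a weighted count of interior fixed points, not their number. Its vanishing still gives the conclusion.
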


The Alexander polynomial and the homological monodromy are related by the
following standard formula; see \cite{Milnor}.

\begin{proposition}\label{prop:alexander}
Let $K\subset S^3$ be a fibered knot with fiber $F$ of genus $g$, and let
$h_*:H_1(F;\Z)\longrightarrow H_1(F;\Z)$ be the homological monodromy.
Then
\begin{equation}\label{eq:alexander}
 \det(tI-h_*)=\pm t^g\Delta_K(t).
\end{equation}
In particular, $\det(I-h_*)=\pm1$.
\end{proposition}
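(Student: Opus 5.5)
We would argue through the infinite cyclic cover of the knot exterior, so that the Alexander module is identified with $H_1(F;\Z)$ carrying the action of $h_*$, and we would cross-check with the Seifert-matrix description. Let $X=S^3\setminus \nu K$. Since $K$ is fibered, $X$ is the mapping torus $F\times[0,1]/(x,1)\sim(h(x),0)$, and the infinite cyclic cover is $\widetilde X\cong F\times\R$. The deck transformation $t$ acts on $H_1(\widetilde X;\Z)\cong H_1(F;\Z)\cong\Z^{2g}$ by $h_*$, possibly $h_*^{-1}$ depending on orientation conventions; this ambiguity only replaces $t$ by $t^{-1}$ and is harmless by symmetry. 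So the Alexander module $H_1(\widetilde X)$, as a $\Z[t^{\pm1}]$-module, is $\Z^{2g}$ with $t$ acting by $h_*$.

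For such a module, the square matrix $tI-h_*$ over $\Z[t^{\pm1}]$ is a presentation matrix. Explicitly, there is an exact sequence
\[
0\to \Z[t^{\pm1}]^{2g}\xrightarrow{\,tI-h_*\,}\Z[t^{\pm1}]^{2g}\to H_1(F;\Z)\to 0,
\]
obtained by tensoring the standard presentation of a module over $\Z[t]$. One can also read it off the Mayer--Vietoris/Wang sequence of the cover cut along a lift of $F$. Hence the zeroth elementary ideal of the Alexander module is principal and generated by $\det(tI-h_*)$. By definition $\Delta_K$ generates the same ideal, so $\det(tI-h_*)=u\,\Delta_K(t)$ for a unit $u=\pm t^k$ of $\Z[t^{\pm1}]$.

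To pin down $k$, note that $\det(tI-h_*)$ is a polynomial of degree exactly $2g$ with constant term $\det(-h_*)=\pm1$, since $h_*$ is invertible over $\Z$. The symmetrized $\Delta_K$ spans degrees $-g$ to $g$, with nonzero extreme coefficients because $\deg\Delta_K=g(K)$ for fibered knots (equivalently, because the determinant has nonzero top and bottom coefficients). Comparing lowest and highest degrees forces $k=g$. Finally, substituting $t=1$ and using the normalization $\Delta_K(1)=1$ gives $\det(I-h_*)=\pm1$.

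As an alternative, or as a check on the identification step, we would use a Seifert matrix $V$ for the fiber surface. For a fibered knot $V$ is unimodular, and the monodromy satisfies $h_*=\pm V^{-1}V^{T}$ up to transpose convention. Then
\[
\det(tI-h_*)=\det(V)^{-1}\det(tV\mp V^{T})=\pm\,t^{g}\Delta_K(t),
\]
using $\Delta_K(t)\doteq\det(t^{1/2}V-t^{-1/2}V^T)$. The main obstacle is not conceptual. It is keeping the orientation and sign conventions consistent: the direction of $t$ versus $h_*$, and $V$ versus $V^T$. For that reason we state the result only up to sign, and we rely on the symmetry $\Delta_K(t)=\Delta_K(t^{-1})$ to absorb $t\leftrightarrow t^{-1}$.
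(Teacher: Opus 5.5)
Your proposal is correct and is essentially the standard infinite-cyclic-cover argument that the paper only cites (via \cite{Milnor}) without proving: you identify the Alexander module with $H_1(F;\Z)$ with $t$ acting by $h_*$, read off $\det(tI-h_*)$ as its order, fix the unit $\pm t^{g}$ by comparing degree spans, and evaluate at $t=1$. One small slip is in your optional Seifert-matrix check: the monodromy is $V^{-1}V^{T}$ (or a transpose variant) with no sign ambiguity, because $-V^{-1}V^{T}$ would give $\pm t^{g}\Delta_K(-t)$ (e.g.\ $t^2+t+1$ instead of $t^2-t+1$ for the trefoil), so you should drop the $\mp$ there.
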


\subsection{Measured foliations and pseudo-Anosov homeomorphisms}
We use pseudo-Anosov dynamics in two settings. We first recall two standard
facts about measured foliations on closed surfaces: an orientability
criterion and the action of a pseudo-Anosov homeomorphism on first
homology. We then turn to a surface with one boundary component and
discuss boundary prongs, the capping construction, and the
Euler--Poincar\'e prong formula.

\begin{lemma}\label{lem:local-orientability}
Let $\Fol$ be a measured foliation on a closed oriented surface $S$, let
$\Sigma$ be its singular set, put $U=S\setminus\Sigma$, and let
$T\Fol|_U$ denote the tangent line bundle of $\Fol$ on $U$. Then $\Fol$
is orientable if and only if $w_1(T\Fol|_U)=0$ in $H^1(U;\Ftwo)$.
If $z\in\Sigma$ is a $p$-pronged singularity and $\gamma_z$ is a small
loop around $z$, then
$\langle w_1(T\Fol|_U),[\gamma_z]\rangle\equiv p\pmod{2}$.
Thus the local orientation obstruction vanishes exactly at even-pronged
singularities.
\end{lemma}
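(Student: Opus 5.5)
The plan is to work with the tangent line field of the foliation on the nonsingular part and to read off both claims from the standard relation between orientability of a line bundle and its first Stiefel--Whitney class. On $U=S\setminus\Sigma$ the foliation $\Fol$ is a genuine nonsingular foliation, so its tangent directions form a real line subbundle $T\Fol|_U\subset TU$. By definition, $\Fol$ is orientable exactly when its leaves can be coherently oriented. Since orientations at the singular points impose no separate condition, this is the same as asking that $T\Fol|_U$ be an orientable line bundle. (Orientability is only a question on $U$: prong structures are determined locally by the foliation, so nothing extra happens at $\Sigma$.) A real line bundle over a space homotopy equivalent to a CW complex is trivial if and only if its $w_1$ vanishes. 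Here $w_1$ is the obstruction class in $H^1(U;\Ftwo)=\Hom(H_1(U),\Ftwo)$ recording, for each loop, whether transport along it reverses the fiber orientation. This gives the first equivalence.

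For the local statement, I take a small disk $D$ around the $p$-pronged singularity $z$, with $D$ containing no other singular point, and let $\gamma_z=\partial D$. By definition, $\langle w_1,[\gamma_z]\rangle$ is $0$ or $1$ according to whether a direction field along $\gamma_z$ tangent to $\Fol$ returns to itself or to its negative after one circuit. To compute this, I use the standard local model of a $p$-prong. Near $z$, $\Fol$ is the horizontal foliation of the quadratic differential $z^{p-2}\,dz^2$, whose leaves are the curves along which $\operatorname{Im}\int z^{(p-2)/2}dz$ is constant. Its tangent directions are given by $v$ with $z^{p-2}v^2>0$, i.e.\ $\arg v\equiv -\tfrac{(p-2)}{2}\arg z \pmod \pi$. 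Going once around $\gamma_z$ increases $\arg z$ by $2\pi$, so the tangent line direction rotates by $-(p-2)\pi$. A line field returns with the same orientation iff this rotation is an even multiple of $\pi$, i.e.\ iff $p$ is even. Hence $\langle w_1,[\gamma_z]\rangle\equiv p-2\equiv p\pmod 2$.

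An alternative count confirms this without formulas. Along $\gamma_z$ the foliation alternates $p$ times between transverse and tangent behavior across the $p$ prongs and $p$ sectors. Following the orientation of the leaf germ across each half-prong sector flips it, and there are $p$ such flips. The final sentence of the lemma is then immediate: the local obstruction vanishes exactly when $p$ is even. I expect no real obstacle. The only point needing care is the convention that a $p$-pronged singularity matches the local model $z^{p-2}dz^2$, including $p=1$ at interior points in the general measured-foliation setting. I will state that convention explicitly.
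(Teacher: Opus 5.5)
Your proposal is correct and follows the same route as the paper. The first assertion is the standard criterion that a real line bundle is orientable if and only if $w_1=0$, and the local evaluation comes from transporting the tangent line once around a $p$-prong; you make explicit, via the model $z^{p-2}\,dz^2$ with rotation $-(p-2)\pi$, the computation that the paper states in one sentence.
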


\begin{proof}
The first assertion is the standard orientability criterion for a real line
bundle. Around a $p$-pronged singularity, transporting an orientation once
around a small loop reverses it precisely when $p$ is odd, which gives the
stated evaluation of $w_1(T\Fol|_U)$.
\end{proof}

Orientability also detects the pseudo-Anosov dilatation on first homology.
We use the following standard fact; see \cite[Theorem~2.2]{LanneauThiffeault}.

\begin{proposition}\label{prop:dilatation}
Let $\phi:S\to S$ be a pseudo-Anosov homeomorphism of a closed oriented
surface with dilatation $\lambda>1$. If one of its invariant measured
foliations is orientable, then
$\phi_*:H_1(S;\R)\longrightarrow H_1(S;\R)$ has a real eigenvalue
$\rho=\pm\lambda$.
\end{proposition}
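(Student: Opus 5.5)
The plan is to build a nonzero cohomology class from the orientable invariant foliation, show that $\phi$ scales it by $\pm\lambda^{\pm1}$, and then pass to homology using duality and the symplectic symmetry of $\phi_*$.

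\textbf{Step 1: a closed $1$-form from the foliation.} Let $\Fol$ be the orientable invariant measured foliation, with transverse measure $\mu$, and let $\Sigma$ and $U=S\setminus\Sigma$ be as in Lemma~\ref{lem:local-orientability}. Fix an orientation of the leaves of $\Fol$. Together with the orientation of $S$, this gives a transverse orientation of $\Fol$ on $U$. Hence $\mu$ upgrades to a signed transverse measure: on a transverse arc, integrate $\mu$ with the sign given by the transverse orientation. In foliation charts $\mu=|dy|$, and the sign turns this into $dy$. This defines a closed $1$-form $\omega$ on $U$, and it extends continuously (as a closed current, or simply by integration along paths) across the singular points. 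Orientability forces all singularities to be even-pronged, so no sign ambiguity arises there. Thus we get a class $[\omega]\in H^1(S;\R)$ with $\langle[\omega],[\gamma]\rangle=\int_\gamma\omega$ for closed curves $\gamma$ in $U$.

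\textbf{Step 2: $[\omega]\neq0$ and it is an eigenvector.} For nonvanishing, I would produce a closed curve $\gamma$ everywhere positively transverse to $\Fol$. Start from a transverse arc through a nonsingular point and use recurrence of the leaves; the foliation of a pseudo-Anosov is minimal, so the arc can be closed up, keeping the transverse orientation consistent because $\Fol$ is transversely oriented. Then $\int_\gamma\omega=\mu(\gamma)>0$.

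Next, $\phi$ maps $\Fol$ to itself and multiplies $\mu$ by $\lambda$ or $\lambda^{-1}$, depending on whether $\Fol$ is the stable or unstable foliation. Since $U$ is connected, $\phi$ either preserves or reverses the chosen leaf orientation globally. As $\phi$ preserves the orientation of $S$, it likewise preserves or reverses the transverse orientation globally. Hence
\[
\phi^*[\omega]=\epsilon\lambda^{\pm1}[\omega],\qquad \epsilon\in\{\pm1\}.
\]

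\textbf{Step 3: passing to homology.} The map $\phi^*$ on $H^1(S;\R)=\Hom(H_1(S;\R),\R)$ is the transpose of $\phi_*$, so $\phi_*$ has the same eigenvalue $\epsilon\lambda^{\pm1}$. If this eigenvalue is $\epsilon\lambda^{-1}$, use that $\phi_*$ preserves the intersection form, a symplectic form. Therefore its characteristic polynomial is reciprocal, and $\epsilon\lambda$ is also an eigenvalue. Either way $\rho=\pm\lambda$ is a real eigenvalue, as claimed.

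The main obstacle is Step~2's nonvanishing. It needs the existence of a closed transversal, and a careful check that the signed measure is a well-defined closed form across the singularities. I would handle the singularities by working in $H^1(U;\R)$ first. Small loops around even-pronged singularities have zero $\omega$-period, since the contributions of the prongs cancel in sign. The class therefore lies in the image of $H^1(S;\R)\to H^1(U;\R)$, which is injective.
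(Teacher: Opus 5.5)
Your argument is correct, and it is a genuinely different route only in that the paper gives no proof. The paper quotes this proposition as a standard fact from Lanneau--Thiffeault (Theorem~2.2). What you wrote is essentially the standard argument behind that citation. The transversely oriented measured foliation gives a closed $1$-form whose class in $H^1(S;\R)$ is an eigenvector of $\phi^*$ with eigenvalue $\pm\lambda^{\pm1}$. Duality and the symplectic symmetry of $\phi_*$ then turn this into the eigenvalue $\pm\lambda$ of $\phi_*$. Supplying it makes the paper self-contained, and your treatment of the singular points is exactly the real-coefficient analogue of the paper's exact sequence \eqref{eq:dual} in Proposition~\ref{prop:mod2}.

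Two small remarks. First, for nonvanishing of $[\omega]$ you do not need an honest closed transversal. As you stated it, ``closing up'' a returning leaf gives a positively transverse loop only if the return lands on the correct side of the starting point, which density lets you arrange. It is simpler to take $x\in\tau$ on a nonsingular leaf, follow the leaf until it returns to $y\in\tau$, and close up along the subarc of $\tau$ between $y$ and $x$. Since $\omega$ vanishes along leaves, this loop lies in $U$ and has $\omega$-period equal to $\pm$ the $\mu$-measure of that subarc, which is nonzero.

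Second, you can bypass the reciprocity step by running the same construction on the other invariant foliation. It is orientable whenever the first one is, because an orientation of one gives a transverse orientation, hence an orientation, of the other. This yields eigenvalues $\pm\lambda$ and $\pm\lambda^{-1}$ of $\phi^*$ directly.
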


We now turn to surfaces with boundary. Let $F$ be a compact oriented
surface with one boundary component, and let $h:F\to F$ fix $\partial F$
pointwise with pseudo-Anosov mapping class. Choose a pseudo-Anosov
representative $\psi$ freely isotopic to $h$, with stable and unstable
invariant measured foliations $(\Fol^s,\mu^s)$ and $(\Fol^u,\mu^u)$ and
dilatation $\lambda>1$. A boundary prong is a prong of an invariant
foliation ending at a point of $\partial F$; see the left-hand side of
Figure~\ref{fig:capping}. The stable and unstable foliations have the same
singularities and the same boundary-prong number $n\geq1$
\cite[Section~2]{BaldwinHuSivek}.

We now cap off the boundary. For $n\geq2$, let
$q:F\to\Fhat=F/\partial F$ collapse $\partial F$ to a point $v$.
As illustrated in Figure~\ref{fig:capping}, the $n$ boundary prongs extend
across $v$. Thus $v$ is an $n$-pronged singularity if $n>2$ and a regular
point if $n=2$.

\begin{lemma}\label{lem:capping}
Assume that $\psi$ has boundary-prong number $n\geq2$. Then $\psi$
induces a pseudo-Anosov homeomorphism $\psihat:\Fhat\to\Fhat$ with the
same dilatation $\lambda$. The invariant measured foliations extend across
$v$ as described above.
\end{lemma}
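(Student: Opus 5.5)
The plan is to verify directly that collapsing $\partial F$ produces a closed surface carrying a pair of transverse measured foliations with the pseudo-Anosov local models, and that $\psi$ descends to a homeomorphism preserving them with the same stretch factors.

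\textbf{Step 1: the quotient.} Since $\psi$ preserves $\partial F$ setwise, it descends to a homeomorphism $\psihat$ of $\Fhat=F/\partial F$ fixing $v$. The quotient $\Fhat$ is a closed oriented surface of genus $g(F)$, homeomorphic to $F$ with a disk glued along $\partial F$. Away from $v$ the map $q$ is a homeomorphism onto its image. So on $\Fhat\setminus\{v\}$ the foliations $q_*\Fol^s,q_*\Fol^u$ remain transverse measured foliations, with the same interior singularities, and $\psihat$ scales $\mu^s,\mu^u$ by $\lambda^{-1},\lambda$ there.

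\textbf{Step 2: the local model at $v$.} I will use the standard description of $\psi$ near the boundary: a collar of $\partial F$ is a union of $n$ boundary-prong leaves of each foliation. In the complementary rectangles the leaves of $\Fol^s$ and $\Fol^u$ are parallel to $\partial F$ in a half-prong fashion; this is the picture of Figure~\ref{fig:capping}. Concretely, a neighborhood of $\partial F$ is the blow-up, at the origin, of the $n$-prong model $\mathrm{Re}(z^{n/2}dz)^2$. Here $\partial F$ is the circle of directions, and $\Fol^{s},\Fol^{u}$ are the pullbacks of the horizontal and vertical foliations of the quadratic differential $z^{n-2}dz^2$.

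Collapsing that circle reverses the blow-up and returns the singular model at $0$. For $n>2$ this gives an $n$-prong singularity. For $n=2$ the differential is $dz^2$, so $v$ is a regular point. The transverse measures extend continuously, giving zero mass to the point $v$, so they are measured foliations on all of $\Fhat$. This requires $n\ge2$: for $n=1$ one would get a $1$-prong, which is not allowed on a closed surface. That is why the hypothesis is there.

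\textbf{Step 3: the dynamics.} $\psihat$ fixes $v$ and permutes its prongs, since $\psi$ permutes the boundary prongs. On the punctured surface it multiplies the measures by $\lambda^{\mp1}$. By continuity, and since the point $v$ has measure zero, the same identities $\psihat_*\mu^s=\lambda^{-1}\mu^s$ and $\psihat_*\mu^u=\lambda\mu^u$ hold on $\Fhat$. Together with transversality and admissible singularities, this is the definition of a pseudo-Anosov map with dilatation $\lambda$. Uniqueness of the dilatation, for instance as the growth rate of curve lengths, confirms that it is the same $\lambda$.

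\textbf{Main obstacle.} The delicate point is Step 2: making rigorous that the boundary behavior of the pseudo-Anosov representative on $F$ is exactly the blown-up prong model. I would handle it by taking as the definition of a pseudo-Anosov map on a surface with boundary that it is obtained from a pseudo-Anosov map of the punctured surface by blowing up the puncture, as in \cite{BaldwinHuSivek}. With that definition the capping construction is literally the inverse operation, and $\psihat$ is the induced map on the filled-in puncture.
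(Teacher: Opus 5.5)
Your proposal is correct and follows the paper's own argument: descend $\psi$ to $\Fhat=F/\partial F$, extend the invariant foliations across $v$ using the standard $n$-prong local model of \cite[Section~2]{BaldwinHuSivek}, and note that the transverse measures extend with the same scaling factors $\lambda^{\mp1}$, so $\psihat$ is pseudo-Anosov with dilatation $\lambda$; your blow-up description just makes the cited local model explicit. One small slip: the $n$-prong model is the quadratic differential $z^{n-2}dz^2$, as you write afterwards (its leaves are level sets of $\mathrm{Im}\,z^{n/2}$), whereas $(z^{n/2}dz)^2=z^{n}dz^2$ would give $n+2$ prongs.
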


\begin{proof}
Since $\psi$ preserves $\partial F$ setwise, it descends to a homeomorphism
$\psihat:\Fhat\to\Fhat$ fixing $v=q(\partial F)$. The invariant measured
foliations extend across $v$ through the standard $n$-prong local model;
see \cite[Section~2]{BaldwinHuSivek}. Their transverse measures extend with
the same scaling factors $\lambda^{-1}$ and $\lambda$. Hence $\psihat$ is
pseudo-Anosov with dilatation $\lambda$.
\end{proof}

\begin{figure}[htbp]
 \centering
 \includegraphics[width=0.96\textwidth]{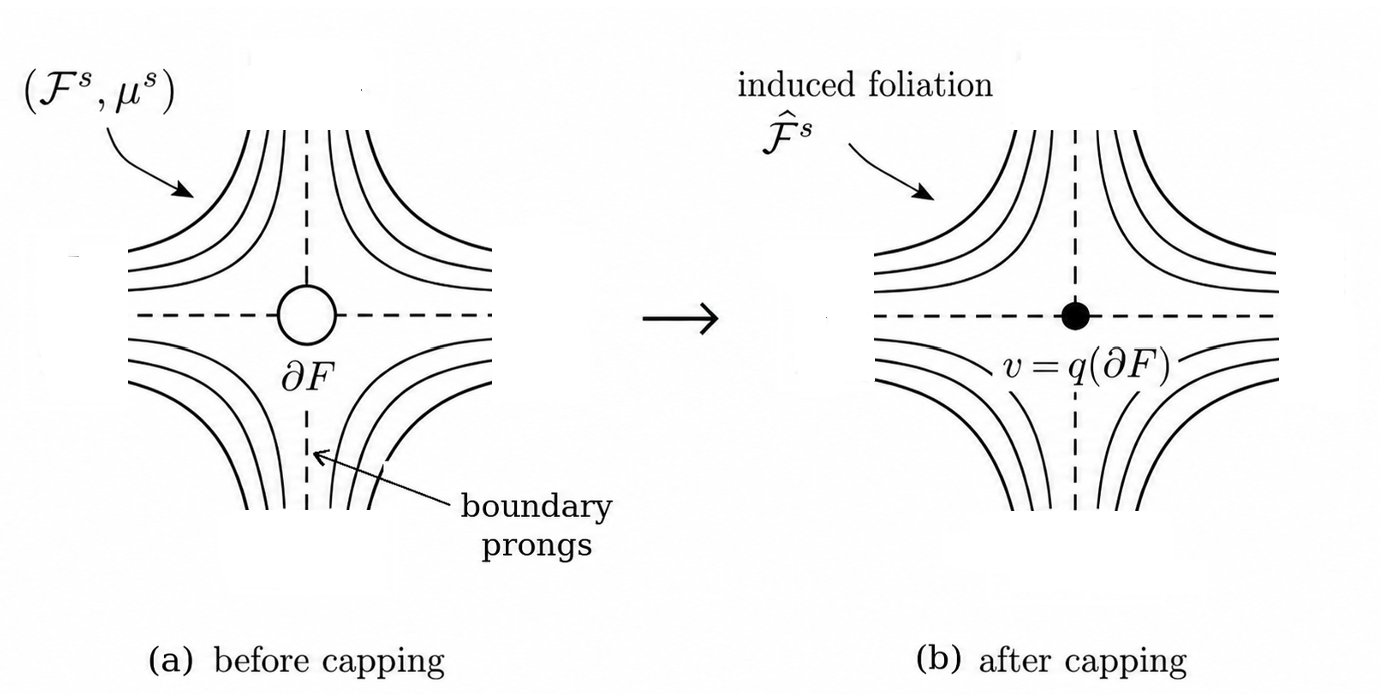}
 \caption{Boundary prongs and the capping construction, shown for $n=4$.
 On the left, the four dashed arcs indicate the boundary prongs ending on
 $\partial F$. After collapsing $\partial F$ to $v$, they extend across
 the capped point.}
 \label{fig:capping}
\end{figure}

The Euler--Poincar\'e formula on the capped surface gives the following
prong relation.

\begin{lemma}\label{lem:euler}
Assume that $\Fol^s$ has boundary-prong number $n\geq2$ and interior
singularities with prong numbers $p_1,\ldots,p_N$. Then
\begin{equation}\label{eq:prongs}
 (n-2)+\sum_{i=1}^N(p_i-2)=4g(F)-4.
\end{equation}
\end{lemma}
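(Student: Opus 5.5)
The plan is to apply the Euler--Poincar\'e formula for measured foliations on the closed capped surface $\Fhat$ given by Lemma~\ref{lem:capping}. For a measured foliation on a closed surface of genus $g(\Fhat)$ with singularities of prong numbers $k_j$ (regular points have $k=2$ and contribute nothing), the formula reads
\begin{equation*}
 \sum_j (2-k_j)=2\chi(\Fhat)=4-4g(\Fhat).
\end{equation*}
Collapsing the single boundary circle of $F$ to a point does not change the genus, so $g(\Fhat)=g(F)$.

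First I will identify the singular set of the capped foliation $\widehat{\Fol}^s$. It consists of the images of the interior singularities of $\Fol^s$, with prong numbers $p_1,\ldots,p_N$, which are unchanged by the quotient because $q$ is a homeomorphism on $\Int F$. In addition there is the capped point $v$. By the description preceding Lemma~\ref{lem:capping}, $v$ is an $n$-pronged singularity when $n>2$ and a regular point when $n=2$. In either case $v$ contributes exactly $2-n$ to the sum, since when $n=2$ this contribution is $0$ and the point is regular.

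Next I will substitute these contributions into the formula:
\begin{equation*}
 (2-n)+\sum_{i=1}^N(2-p_i)=4-4g(F).
\end{equation*}
Multiplying by $-1$ gives \eqref{eq:prongs}.

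The only delicate point is to justify using the classical Euler--Poincar\'e formula on $\Fhat$. This requires that the capped object is a genuine measured foliation with finitely many prong singularities, each having at least $3$ prongs, or possibly $1$-prongs. That is exactly what Lemma~\ref{lem:capping} supplies. One should also note that interior singularities of a pseudo-Anosov foliation have $p_i\ge 3$, although the formula does not need this. Alternatively, I could avoid capping and instead double $F$ along $\partial F$, or apply Poincar\'e--Hopf to a line field with boundary corrections. The capped surface is cleaner, however, and it matches the setup already established.
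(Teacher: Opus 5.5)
Your proposal is correct and follows the paper's own argument: it applies the Euler--Poincar\'e prong formula to the capped foliation on the closed surface $\Fhat$ from Lemma~\ref{lem:capping}. As in the paper, the capped point contributes $n-2$ (zero when $n=2$, where it is regular), the interior singularities contribute $p_i-2$, and $g(\Fhat)=g(F)$.
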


\begin{proof}
Apply the standard Euler--Poincar\'e prong formula to the extended
foliation on the closed surface $\Fhat$. The capped point contributes
$n-2$, with contribution $0$ when $n=2$.
\end{proof}

\subsection{Stable laminations and Dehn filling}
Let $K\subset S^3$ be a hyperbolic fibered knot with exterior $X_K$.
Following \cite[Section~5]{GabaiOertel}, let
$\lambda^s\subset\Int X_K$ be the stable lamination transverse to the
fibration; it determines a slope on $\partial X_K$, called its degeneracy
slope. Let $h$ be the monodromy of $K$, and let $c(h)$ denote its
fractional Dehn twist coefficient along $\partial F$; see
\cite[Section~2.1]{NiExceptional}. If a pseudo-Anosov representative of
$h$ has boundary-prong number $n$, then $c(h)=k/n$ for some integer $k$;
see \cite[Section~2]{BaldwinHuSivek}. For slopes $a/b$ and $p/q$ on
$\partial X_K$, let $\Delta(a/b,p/q)=|aq-bp|$ denote their geometric
intersection number. The following proposition records the consequences of
these results that we will need later.

\begin{proposition}\label{prop:degeneracy}
Let $K\subset S^3$ be a hyperbolic fibered knot of genus $g$ admitting a
positive lens space surgery. Let $h$ be its monodromy, let $\psi$ be a
pseudo-Anosov representative freely isotopic to $h$, and let $n$ be its
boundary-prong number. Then:
\begin{enumerate}[label=(\arabic*),leftmargin=*,itemsep=2pt,topsep=4pt]
 \item the stable lamination has degeneracy slope $d/1$ for some integer
 $2\leq d\leq4g-2$;
 \item $n=kd$ for some positive integer $k$;
 \item if a slope $p/q$ satisfies $\Delta(d/1,p/q)\geq2$, then
 $S^3_{p/q}(K)$ is not a lens space.
\end{enumerate}
\end{proposition}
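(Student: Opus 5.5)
The plan is to assemble Proposition~\ref{prop:degeneracy} from known results on fractional Dehn twist coefficients and on essential laminations under Dehn filling. The approach is to identify the degeneracy slope with the fractional Dehn twist coefficient, bound that coefficient using the L-space hypothesis, and then apply persistence of the stable lamination.

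\textbf{Degeneracy slope and twist coefficient.} By Gabai--Oertel \cite[Section~5]{GabaiOertel}, the suspension of the stable lamination of $\psi$ gives an essential lamination $\lambda^s$ in $X_K$. Its complementary region meeting $\partial X_K$ is a solid torus minus a neighborhood of a core, cut into $n$ cusped ``crown'' pieces. The degeneracy slope is the slope on $\partial X_K$ of the closed curves traced by the $n$ prong endpoints under the suspension flow. In the meridian--longitude basis this slope has the form $a/b$, where $b$ is the number of times such a curve meets the longitude $\partial F$. One then checks, as in \cite[Section~2]{BaldwinHuSivek} or \cite{NiExceptional}, that the degeneracy slope equals $1/c(h)$ in suitable coordinates. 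Concretely, if $c(h)=k/n$ in lowest terms, the degeneracy slope is $n/k$ after accounting for orientations.

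Next I use the L-space hypothesis. For fibered L-space knots the monodromy is right-veering and, with $n\geq 2$ prongs by Ni's result (Theorem~\ref{thm:Ni} excludes some degenerate configurations), one knows $c(h)=1/n$. This is the known computation of the fractional Dehn twist coefficient for L-space knots. Writing $n=kd$ would then give degeneracy slope $d/1$, an integer, with $d$ dividing $n$. This proves (2) in the form $n=kd$, where $k\geq 1$ counts the prong orbits.

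\textbf{The bound $2\leq d\leq 4g-2$.} Integrality of $d$ means $\Delta(d/1,1/0)=1$. The lower bound $d\geq 2$ follows from $n\geq 2$ together with the fact that the degeneracy slope cannot be the longitude $0$ or $\pm1$. Here I will have to track the exact mechanism; Gabai's bound on degeneracy slopes handles it. For the upper bound I use Lemma~\ref{lem:euler}. Since every $p_i\geq 3$ contributes nonnegatively, we get $n-2\leq 4g-4$, so $d\leq n\leq 4g-2$. The positivity $d>0$ comes from the right-veering property forced by positive L-space surgery.

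\textbf{Part (3).} By Gabai--Oertel persistence, $\lambda^s$ remains essential in the filled manifold $X_K(p/q)$ whenever $\Delta(d/1,p/q)\geq 2$. This is Gabai's theorem that essential laminations with degeneracy slope $\delta$ survive filling along slopes meeting $\delta$ at least twice. A closed 3-manifold containing an essential lamination has universal cover $\mathbb{R}^3$ and infinite fundamental group. A lens space has finite $\pi_1$, so the filled manifold is not a lens space.

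The main obstacle is the identification of the degeneracy slope with an integer $d$ dividing $n$. Getting orientation conventions and the relation $c(h)=1/n$ for L-space knots exactly right is delicate. My first attempt would be to cite the computation of \cite[Section~2]{BaldwinHuSivek} and \cite{NiExceptional} directly.
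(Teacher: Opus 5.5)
\textbf{Gap: integrality of the degeneracy slope.} Your part (3) matches the paper. Parts (1) and (2), however, rest on an unsupported claim. You get integrality of the degeneracy slope, and hence $n=kd$, from the assertion that $c(h)=1/n$ for every hyperbolic L-space knot, which you call ``the known computation''. The cited literature does not provide this, and you give no argument for it.

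What is available in general is much weaker. One has $c(h)\in\frac1n\Z$, because $\psi$ permutes the $n$ boundary prongs. For L-space knots one also has $c(h)>0$, by right-veering. Nothing you invoke forces the prongs to rotate by a single position. Neither right-veering nor the rank-one input behind Theorem~\ref{thm:Ni} does so.

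Note that $c(h)=1/n$ would give $d=n$ outright, which is strictly stronger than (2). The paper proves only $n=kd$. It then extracts $k=1$ inside the proof of Theorem~\ref{thm:spectral}, from $n\le 4g-2$ together with $d\ge 4g-3$.

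Your write-up is also internally inconsistent at this point. With your reciprocal formula, $c(h)=1/n$ yields the slope $n/1$, leaving no room for a general $k$. Two further steps are left open:
\begin{itemize}
\item The bound $d\ge 2$ is deferred to ``Gabai's bound'' without a statement.
\item You attribute $n\ge 2$ to Theorem~\ref{thm:Ni}, but as used here that theorem concerns only interior fixed points.
\end{itemize}
There is also a small slip: a curve of slope $a/b$ meets the longitude $|a|$ times, not $|b|$ times.

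\textbf{How to repair it.} The missing ingredient is Gabai's restriction \cite[Theorem~8.8]{Gabai}, as recorded in \cite[Theorem~2.4]{NiExceptional}. It says the degeneracy slope of a hyperbolic fibered knot in $S^3$ is either $1/0$ or $d/1$ with $2\le|d|\le4g-2$.

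The integrality part comes directly from your own part (3) argument applied to the meridional filling. Since $X_K(1/0)=S^3$ has finite $\pi_1$, Gabai--Oertel persistence \cite[Theorem~5.3]{GabaiOertel} forces $\Delta(\delta,1/0)\le1$ for the degeneracy slope $\delta$. Hence $\delta$ is $1/0$ or an integer. The bound $|d|\ge2$ is part of Gabai's theorem.

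Next, exclude $1/0$ and $d<0$. Use right-veering, which follows from $\tau(K)=g(K)$, Hedden's criterion \cite{Hedden} and \cite{HKM}, together with Ni's Proposition~2.3. Then compare the reciprocal formula $c(h)=1/d$ with $c(h)=k/n$. This gives $n=kd$ with $k>0$, without any claim that $k=1$.

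Once this is in place, your Euler--Poincar\'e argument via Lemma~\ref{lem:euler} legitimately gives $d\le n\le 4g-2$. It is a valid substitute for quoting Gabai's upper bound, since $n=kd\ge d\ge 2$.
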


\begin{proof}
For (1), Gabai's theorem \cite[Theorem~8.8]{Gabai}, in the form recorded
by Ni \cite[Theorem~2.4]{NiExceptional}, shows that the degeneracy slope
of $\lambda^s$ on $\partial X_K$ is either the meridional slope $1/0$
or a slope $d/1$ with $2\leq|d|\leq4g-2$. Since $K$ admits a positive
lens space surgery, Ozsv\'ath and Szab\'o's constraints on knot Floer
homology give $\tau(K)=g(K)$ \cite{OS}. By
Proposition~\ref{prop:fibered}, $K$ is fibered. We may therefore apply the
implication $(4)\Rightarrow(2)$ in Hedden's Proposition~2.1 \cite{Hedden}:
the equality $\tau(K)=g(K)$ implies that the open book associated to $K$
supports the unique tight contact structure on $S^3$. Hence its monodromy
is right-veering by \cite[Theorem~1.1]{HKM}. Ni's Proposition~2.3
\cite{NiExceptional} then implies that the degeneracy slope is positive,
so it is $d/1$ with $2\leq d\leq4g-2$. This proves (1).

For (2), write $c(h)=k/n$ for some integer $k$, as above. Ni's reciprocal
formula \cite[Section~2.1]{NiExceptional} gives $c(h)=1/d$. Since $d>0$
by (1), the equality $1/d=k/n$ yields $n=kd$ and $k>0$.

For (3), the theorem of Gabai and Oertel \cite[Theorem~5.3]{GabaiOertel},
in the form recorded by Ni \cite[Theorem~2.5]{NiExceptional}, shows that
$\lambda^s$ remains essential in $S^3_{p/q}(K)$ whenever
$\Delta(d/1,p/q)\geq2$. A closed $3$-manifold containing an essential
lamination has universal cover $\R^3$ \cite[Theorem~6.1]{GabaiOertel};
hence its fundamental group is infinite, so it cannot be a lens space.
\end{proof}

\subsection{Lens space surgeries of torus and satellite knots}
Moser's classification of lens space surgeries on torus knots takes the
following form in our surgery convention.

\begin{theorem}[Moser]\label{thm:Moser}
Let $r>s>1$ be coprime integers, and let $p>0$. Then $p$-surgery on
$T(r,s)$ is a lens space if and only if $p=rs\pm1$. In either case
$S^3_p(T(r,s))\cong L(p,s^2)$ orientation-preservingly. No positive
integer surgery on the mirror of $T(r,s)$ yields a lens space.
\end{theorem}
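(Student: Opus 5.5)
The plan is to use the Seifert fibered structure of the torus knot exterior. The exterior $X$ of $T(r,s)$ is Seifert fibered over the disk with two exceptional fibers of orders $r$ and $s$. On $\partial X$ the regular fiber has slope $rs/1$ in meridian--longitude coordinates. Hence for $p/q$-surgery the filling solid torus meets a regular fiber $\Delta(rs/1,p/q)=|rsq-p|$ times.

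There are three cases according to the value $m=|rsq-p|$:
\begin{itemize}[leftmargin=*,itemsep=2pt,topsep=4pt]
 \item If $m\geq 2$, then $S^3_{p/q}(T(r,s))$ is Seifert fibered over $S^2$ with three exceptional fibers of orders $r,s,m$.
 \item If $m=1$, the fibration extends with only two exceptional fibers, so the result is a lens space.
 \item If $m=0$, the fibration degenerates and we get the reducible manifold $L(r,s)\#L(s,r)$.
\end{itemize}
Specializing to $q=1$ and $p>0$, the result is a lens space when $p=rs\pm1$. It is not a lens space when $p=rs$, since it is reducible, nor when $|rs-p|\geq2$. In the last case I will argue via $\pi_1$ rather than via uniqueness of Seifert fibrations. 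The fundamental group surjects onto the orbifold group of $S^2(r,s,m)$, namely the von Dyck group $D(r,s,m)=\langle a,b,c\mid a^r=b^s=c^m=abc=1\rangle$. Since $r,s$ are coprime with $r>s>1$, at most one of $r,s,m$ equals $2$ apart from the case $(r,s)=(3,2)$, $m=2$, where the group is $S_3$. In all cases $D(r,s,m)$ is nonabelian, so $\pi_1$ is not cyclic.

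For the mirror, observe that positive $p$-surgery on $\overline{T(r,s)}$ is the orientation reversal of $S^3_{-p}(T(r,s))$. Here $m=|rs+p|\geq rs+1\geq 2$, so the same von Dyck argument shows the result is never a lens space. This gives the last sentence of the statement.

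The main obstacle is the orientation-preserving identification $S^3_{rs\pm1}(T(r,s))\cong L(p,s^2)$. Linking forms alone only determine $q$ up to multiplication by squares and sign, which is too weak. Instead I would do an explicit computation. Place $T(r,s)$ on a standard Heegaard torus $T\subset S^3$, so that the surface framing of $K$ from $T$ is $rs$. Surgery with slope $rs\pm1$ is then distance one from the surface framing. A standard argument (Moser, or equivalently a Rolfsen twist / Kirby calculus presentation) shows that the resulting manifold is obtained by gluing the two solid tori $V_1,V_2$ of the splitting along an annulus-twisted map. One then computes the image of the meridian of $V_2$ on $\partial V_1$ directly; it is a curve of slope determined by $r,s$ and the sign $\pm$. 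Reading off $L(p,q)$ from this gluing in the paper's convention $L(r,s)=S^3_{r/s}(U)$ gives $q\equiv s^2$ or $r^2$ modulo $p$.

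These values agree up to the lens space ambiguity. From $rs\equiv\mp1\pmod p$ we get $r\equiv\mp s^{-1}$, so $r^2\equiv s^{-2}\pmod p$, and $L(p,s^2)\cong L(p,s^{-2})$. Thus the answer is $L(p,s^2)$ in either case. Finally, I would check the sign convention on a known example, such as $T(3,2)$ with $+5$-surgery yielding $L(5,4)$, to confirm orientations.
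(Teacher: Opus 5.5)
\textbf{Comparison with the paper.} The paper does not prove this statement. It cites Moser's Propositions~3.1--3.2, and Remark~\ref{rmk:Moser-convention} asserts that translating her slope convention gives the stated form, including the orientation-preserving identification with $L(p,s^2)$ where $L(r,s)=S^3_{r/s}(U)$. You instead reconstruct Moser's argument, and the outline is correct:
\begin{enumerate}[label=(\alph*),leftmargin=*,itemsep=2pt,topsep=4pt]
 \item the fiber slope $rs$ and the distance $m=|rs-p|$ give the trichotomy;
 \item the non-lens cases, including the mirror where $m=rs+p$, are excluded by the surjection $\pi_1\to D(r,s,m)$, which neatly avoids uniqueness of Seifert fibrations;
 \item since $rs\pm1$ is at distance one from the torus framing, the surgered manifold is the genus-one splitting reglued by a Dehn twist along $K$.
\end{enumerate}
Your route gives a self-contained check of exactly the orientation convention the paper leaves to the reader. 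The citation gives only brevity.

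\textbf{Points to tighten.}

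First, your case analysis of von Dyck groups is false as stated. For $s=2$ and any odd $r\geq3$, $p=2r\pm2$ gives $m=2$, so two of $r,s,m$ equal $2$ (e.g.\ $T(5,2)$ with $p=8$ or $12$). The conclusion survives, since $D(r,2,2)$ is dihedral of order $2r$. More simply, for $a,b,c\geq2$ the group $D(a,b,c)$ is never cyclic: the only abelian one is $D(2,2,2)\cong(\Z/2)^2$. So no case analysis is needed.

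Second, write the twist computation out rather than asserting it. Take $[K]=r\lambda_1+s\mu_1$ on $\partial V_1$ and $\mu_2=\lambda_1$. The twist sends $\mu_2$ to $(1+\varepsilon rs)\lambda_1+\varepsilon s^2\mu_1$ for some sign $\varepsilon$. The condition $|H_1|=p$ forces $\varepsilon=\pm1$ according as $p=rs\pm1$. In both cases the curve is $\pm(p\lambda_1+s^2\mu_1)$. Hence the result is $L(p,\eta s^2)$ for a single sign $\eta$. This sign depends only on orientation conventions, not on $r,s$ or on the choice $rs\pm1$, and that is what justifies calibrating $\eta$ on one example. The other placement $[K]=s\lambda_1+r\mu_1$ gives $r^2\equiv s^{-2}$, as you note.

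Third, the calibrating example must be established independently. The identification $S^3_5(T(3,2))\cong L(5,4)$ is correct in the paper's convention. You can confirm it with the $d$-invariant surgery formula. Its output $\{-1,\pm\frac15,\pm\frac15\}$ matches $S^3_{-5}(U)\cong L(5,4)$, and not $L(5,1)$ or the amphichiral $L(5,2)$.

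A minor correction: for oriented manifolds the linking form determines $q$ up to squares, not up to sign. Your conclusion that it is too weak is still right.
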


\begin{remark}\label{rmk:Moser-convention}
Moser uses a different slope convention. Translating her convention to ours
gives Theorem~\ref{thm:Moser}; see \cite[Propositions~3.1--3.2]{Moser}.
\end{remark}

For satellite knots, Zhang's Main Theorem yields the following
classification of lens space surgeries \cite{Zhang}. Related
classifications were obtained independently by Bleiler--Litherland, Wang,
and Wu \cite{BleilerLitherland,Wang,Wu}.

\begin{theorem}\label{thm:satellite}
Let $K\subset S^3$ be a satellite knot and let $p>0$. If $p$-surgery on
$K$ is a lens space, then there are coprime integers $a,b>1$ such that
$p=4ab\pm1$, and $S^3_p(K)$ and $L(p,4b^2)$ are orientation-preservingly
homeomorphic. In particular, $p$ is odd and $p\geq23$.
\end{theorem}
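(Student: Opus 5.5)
The approach is to reduce to the classical structure theorem for satellite knots with cyclic surgeries, and then pin down the parameters with Moser's rational-slope classification. By Zhang's Main Theorem \cite{Zhang} (equivalently \cite{BleilerLitherland,Wang,Wu}), if a satellite knot $K$ has a nontrivial surgery yielding a lens space, then $K$ is a cable $C_{m,n}(J)$ of a knot $J$, where $m\geq2$ is the winding number. Moreover, the surgery slope is the cabling slope's neighbour $p=mn\pm1$. In that case the solid torus containing the pattern fills to a solid torus, so Gordon's cabling formula gives
\begin{equation*}
 S^3_p(C_{m,n}(J))\cong S^3_{p/m^2}(J).
\end{equation*}

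The first step is to identify the companion. The slope $p/m^2$ is nonintegral because $\gcd(p,m)=1$ and $m\geq2$. Hence $J$ cannot be hyperbolic, by the Cyclic Surgery Theorem \cite{CGLS}. If $J$ were itself a satellite, we would iterate the argument; Zhang's theorem excludes this, since an iterated cable admits no cyclic surgery. So $J=T(a,b)$ is a torus knot with $a,b>1$ coprime, possibly after mirroring, with signs to be fixed by positivity.

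The second step uses Moser's classification in its rational form (\cite[Propositions~3.1--3.2]{Moser}, of which Theorem~\ref{thm:Moser} is the integer case): $S^3_{P/Q}(T(a,b))$ is a lens space iff $|P-Qab|=1$. Applied with $P=mn\pm1$ and $Q=m^2$, this gives
\begin{equation*}
 |mn\pm1-m^2ab|=1 .
\end{equation*}
Since $m\geq2$, the alternative $mn=m^2ab$ would force $m\mid 1$ through coprimality of $m$ and $n$, which is impossible. So $m(n-mab)=\pm2$, forcing $m=2$ and $n=2ab\pm1$. Then $p=2n\pm1$, and the lens-space requirement selects $p=4ab\pm1$ with the matching sign. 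Positivity of $p$ excludes the mirror configurations, as in the last sentence of Theorem~\ref{thm:Moser}.

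For the homeomorphism type, I would use Moser's description $S^3_{P/Q}(T(a,b))\cong L(P,Qb^2)$ up to the convention translation of Remark~\ref{rmk:Moser-convention}. With $Q=4$ this yields $L(p,4b^2)$ orientation-preservingly. The main obstacle is exactly this orientation and sign bookkeeping between Moser's conventions and ours; I would check it on one explicit example, such as the $(2,13)$-cable of $T(2,3)$ with $p=23$.

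Finally, $p=4ab\pm1$ is odd. Since $a,b>1$ are coprime, $ab\geq6$, so $p\geq23$.
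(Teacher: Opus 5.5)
Your proposal is correct and is essentially the paper's route: the paper gives no independent argument but quotes Zhang's Main Theorem (with Bleiler--Litherland, Wang, Wu), and you unpack that classification through Gordon's cabling formula $S^3_p(C_{m,n}(J))\cong S^3_{p/m^2}(J)$ and Moser's rational formula $S^3_{P/Q}(T(a,b))\cong L(P,Qb^2)$ for $|P-Qab|=1$, which is consistent with Theorem~\ref{thm:Moser} at $Q=1$ and gives $L(p,4b^2)$ at $Q=4$. Two justifications can be tightened: since $p/m^2$ is nonintegral, \cite[Corollary~1]{CGLS} (which applies to every non-torus knot) rules out a hyperbolic or satellite companion directly, with no iteration needed, and a negative torus-knot companion is excluded because $p+m^2ab\geq 1+4\cdot 6>1$, not by the integer-slope sentence at the end of Theorem~\ref{thm:Moser}.
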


\section{A mod-2 orientability criterion}

Even-pronged singularities have no local orientation obstruction, but a
global obstruction may remain. The following proposition gives a sufficient
homological condition for global orientability.

\begin{proposition}\label{prop:mod2}
Let $F$ be a compact oriented surface with one boundary component. Let
$h:F\to F$ fix $\partial F$ pointwise, and suppose that its mapping
class is pseudo-Anosov. Let $\psi:F\to F$ be a pseudo-Anosov
homeomorphism freely isotopic to $h$. Assume that every interior singularity
of the invariant foliations has an even number of prongs and that the
boundary-prong number $n$ is even. Let
$h_*:H_1(F;\Z)\longrightarrow H_1(F;\Z)$ be the induced map. If
$\det(I-h_*)$ is odd, then the invariant foliations extend to orientable
foliations on the capped surface $\Fhat$.
\end{proposition}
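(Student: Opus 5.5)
The plan is to cap off the boundary and argue entirely in mod-$2$ cohomology of the closed surface $\Fhat$. The orientation obstruction of the capped foliation will be a class fixed by the monodromy, and the parity of $\det(I-h_*)$ will force that class to vanish.

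\emph{Step 1: cap off.} Since $n\geq1$ is even, we have $n\geq2$. Lemma~\ref{lem:capping} then gives the capped pseudo-Anosov map $\psihat:\Fhat\to\Fhat$, whose invariant foliations $\Folhat^s,\Folhat^u$ extend across $v$. At $v$ the foliation is either regular ($n=2$) or has $n$ prongs with $n$ even. So every singularity of $\Folhat^s$ on $\Fhat$ has an even number of prongs. Let $\Sigmahat$ be the singular set and $U=\Fhat\setminus\Sigmahat$. Put $w=w_1(T\Folhat^s|_U)\in H^1(U;\Ftwo)$. By Lemma~\ref{lem:local-orientability}, $w$ evaluates to $0$ on every small loop $\gamma_z$ around a point $z\in\Sigmahat$.

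\emph{Step 2: descend $w$ to the closed surface.} $U$ is $\Fhat$ with finitely many points removed. The Mayer--Vietoris (or long exact) sequence shows that restriction $H^1(\Fhat;\Ftwo)\to H^1(U;\Ftwo)$ is injective. Its image is exactly the set of classes vanishing on all the puncture loops $[\gamma_z]$. Hence $w=\iota^*\widehat w$ for a unique $\widehat w\in H^1(\Fhat;\Ftwo)$. Moreover, $\widehat w=0$ if and only if $w=0$, that is, if and only if $\Folhat^s$ is orientable.

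\emph{Step 3: invariance.} Because $\psihat$ preserves $\Folhat^s$ and $\Sigmahat$, it maps $U$ to itself and pulls back the line bundle $T\Folhat^s|_U$ to itself. Naturality of $w_1$ then gives $\psihat^*w=w$. By uniqueness in Step 2, $\psihat^*\widehat w=\widehat w$.

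\emph{Step 4: identify the action on homology.} Since $\partial F$ is a single boundary component, it is a product of commutators. Therefore $q_*:H_1(F;\Z)\to H_1(\Fhat;\Z)$ is an isomorphism. A free isotopy from $h$ to $\psi$ induces the same map on $H_1(F;\Z)$, because homology is insensitive to basepoints. Hence $\psihat_*$ is conjugate to $h_*$ under $q_*$. Reducing mod $2$, the oddness of $\det(I-h_*)$ means $I-h_*$ is invertible over $\Ftwo$. Thus $\psihat_*$ has no nonzero fixed vector in $H_1(\Fhat;\Ftwo)$. Dually, $\psihat^*$ on $H^1(\Fhat;\Ftwo)=\Hom(H_1(\Fhat;\Ftwo),\Ftwo)$ is the transpose, so $I-\psihat^*$ is also invertible.

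\emph{Step 5: conclude.} Step 3 gives $(I-\psihat^*)\widehat w=0$, so $\widehat w=0$. Hence $\Folhat^s$ is orientable. The same argument applies verbatim to $\Folhat^u$. Alternatively, one can note that the unstable foliation is transverse to the stable one away from the common singular set, so its tangent line bundle on $U$ is the normal bundle of $T\Folhat^s|_U$ and has the same $w_1$, since $TU$ is orientable.

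The main obstacle I expect is Step 2 together with its interaction with the capped point. One must check that the extension of the foliation across $v$ really produces an even-pronged (or regular) point, so that the loop around $v$ is not an obstruction; this is exactly where the evenness of $n$ is used. One must also check that the descended class is genuinely $\psihat$-invariant. The homological bookkeeping in Step 4, relating free isotopy and the capping map, is routine but should be stated carefully.
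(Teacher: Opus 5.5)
Your proposal is correct and follows the paper's proof essentially step for step. Both arguments cap off, then descend $w_1(T\Folhat^s|_U)$ to a unique class in $H^1(\Fhat;\Ftwo)$ using injectivity of $i^*$, which the paper obtains from the long exact sequence of $(\Fhat,U_0)$. Both then get $\psihat^*$-invariance by naturality and kill the class using invertibility of $I-\psihat^*$, transported from $I-h_{*,2}$ through the isomorphism $q_*$. Your treatment of the unstable foliation via $TU\cong T\Folhat^s|_U\oplus T\Folhat^u|_U$ and $w_1(TU)=0$ is a valid shortcut in place of the paper's ``the same argument applies.''
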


\begin{proof}
Let $h_{*,2}$ denote the induced map on $H_1(F;\Ftwo)$. Since
$\det(I-h_*)$ is odd, $\det(I-h_{*,2})=1$ in $\Ftwo$, and hence
$I-h_{*,2}$ is invertible. From now on, all homology and cohomology groups
in the proof have coefficients in $\Ftwo$.

Since $n$ is positive and even, Lemma~\ref{lem:capping} gives a
pseudo-Anosov homeomorphism $\psihat:\Fhat\to\Fhat$. Let $\Sigmahat$
be the singular set of the capped stable foliation $\Folhat^s$, put
$U=\Fhat\setminus\Sigmahat$, let $i:U\hookrightarrow\Fhat$ be the
inclusion, and let $q:F\to\Fhat$ collapse $\partial F$ to the capped
point. The diagram
\[
\begin{tikzcd}[row sep=2.9em,column sep=3.5em]
 U \arrow[r,"i"] \arrow[d,"\psihat|_U"']
 & \Fhat \arrow[d,"\psihat"]
 & F \arrow[l,"q"'] \arrow[d,"\psi"] \\
 U \arrow[r,"i"'] & \Fhat & F \arrow[l,"q"]
\end{tikzcd}
\]
commutes. Passing to homology gives
\[
\begin{tikzcd}[row sep=2.9em,column sep=3.5em]
 H_1(U) \arrow[r,"i_*"] \arrow[d,"(\psihat|_U)_*"']
 & H_1(\Fhat) \arrow[d,"\psihat_*"]
 & H_1(F) \arrow[l,"q_*"',"\cong"] \arrow[d,"\psi_*"] \\
 H_1(U) \arrow[r,"i_*"']
 & H_1(\Fhat)
 & H_1(F) \arrow[l,"q_*","\cong"']
\end{tikzcd}
\]
and, dually,
\[
\begin{tikzcd}[row sep=2.9em,column sep=3.5em]
 H^1(U)
 & H^1(\Fhat) \arrow[l,"i^*"'] \arrow[r,"q^*","\cong"']
 & H^1(F) \\
 H^1(U) \arrow[u,"(\psihat|_U)^*"]
 & H^1(\Fhat) \arrow[l,"i^*"] \arrow[r,"q^*"']
                 \arrow[u,"\psihat^*"']
 & H^1(F) \arrow[u,"\psi^*"']
\end{tikzcd}
\]
Here $q_*$ is an isomorphism because $F$ has one boundary component;
hence so is $q^*$.

Choose pairwise disjoint closed disks $D_z$ about the points
$z\in\Sigmahat$ and put
$U_0=\Fhat\setminus\bigcup_{z\in\Sigmahat}\Int D_z$.
The inclusion $U_0\hookrightarrow U$ is a homotopy equivalence. The long
exact sequence of the pair $(\Fhat,U_0)$ gives an exact sequence
\[
 \bigoplus_{z\in\Sigmahat}\Ftwo[\gamma_z]
 \longrightarrow H_1(U;\Ftwo)
 \xrightarrow{i_*}H_1(\Fhat;\Ftwo)\longrightarrow0,
\]
where $\gamma_z=\partial D_z$. Dualizing gives
\begin{equation}\label{eq:dual}
 0\longrightarrow H^1(\Fhat;\Ftwo)
 \xrightarrow{i^*}H^1(U;\Ftwo)
 \longrightarrow\Hom\left(
 \bigoplus_{z\in\Sigmahat}\Ftwo[\gamma_z],\Ftwo\right).
\end{equation}
In particular, $i^*$ is injective.

Let $T\Folhat^s|_U$ denote the tangent line bundle of $\Folhat^s$ on
$U$. Every singularity of $\Folhat^s$ is even-pronged: the interior
singularities are even-pronged by hypothesis, while the capped point is
$n$-pronged when $n>2$ and regular when $n=2$. Hence
Lemma~\ref{lem:local-orientability} gives
$\langle w_1(T\Folhat^s|_U),[\gamma_z]\rangle=0$ for every
$z\in\Sigmahat$. By exactness of \eqref{eq:dual}, there is therefore a
unique class $w\in H^1(\Fhat;\Ftwo)$ such that
$i^*w=w_1(T\Folhat^s|_U)$. Since $\Folhat^s$ is $\psihat$-invariant,
naturality of the first Stiefel--Whitney class gives
$(\psihat|_U)^*w_1(T\Folhat^s|_U)=w_1(T\Folhat^s|_U)$.
Thus $i^*(\psihat^*w-w)=(\psihat|_U)^*i^*w-i^*w=0$.
Since $i^*$ is injective, $\psihat^*w=w$.

Finally, since $h$ and $\psi$ are freely isotopic, they induce the same
map on $H_1(F;\Ftwo)$. As $q_*$ is an isomorphism, the right-hand
square of the homology diagram gives
$\psihat_*=q_*h_{*,2}q_*^{-1}$. Hence $I-\psihat_*$ is invertible
because $I-h_{*,2}$ is invertible, and therefore so is $I-\psihat^*$.
Since $\psihat^*w=w$, we obtain $w=0$. Thus
$w_1(T\Folhat^s|_U)=i^*w=0$, so the stable foliation is orientable.
The same argument applies to the unstable foliation.
\end{proof}

\section{The spectral obstruction}

We now prove the spectral obstruction. The argument has two steps.
First, the persistence theorem for the stable lamination forces the
degeneracy slope to lie within distance one of the lens space surgery
slope. The prong formula and the interior fixed-point conclusion of Ni's
theorem then rule out the only remaining alternative to $d=4g-2$. Once
$d=4g-2$, the prong formula
leaves no interior singularities, and the mod-$2$ criterion makes the
invariant foliations orientable after capping. The pseudo-Anosov dilatation
can then be detected on first homology and hence by the Alexander polynomial.

\begin{proof}[Proof of Theorem~\ref{thm:spectral}]
Let $p=\pm(4g-2)$ be the surgery slope. Since
$S^3_{-p}(\overline{K})\cong -S^3_p(K)$, replacing $K$ by its mirror if
necessary allows us to assume $p=4g-2>0$. Mirroring preserves
hyperbolicity, genus, and the Alexander polynomial. By
Proposition~\ref{prop:fibered}, $K$ is fibered. Since $K$ is hyperbolic,
its monodromy $h$ is pseudo-Anosov. Let $\psi$ be the pseudo-Anosov
representative given by Theorem~\ref{thm:Ni}, so $\psi$ has no fixed points
in $\Int F$. Denote by $n$ its boundary-prong number and by
$p_1,\ldots,p_N$ the prong numbers of its interior singularities.

By Proposition~\ref{prop:degeneracy}(1)--(2), the stable lamination has
degeneracy slope $d/1$ with $2\leq d\leq4g-2$, and $n=kd$ for some
positive integer $k$. In particular, $n\geq2$. Hence Lemma~\ref{lem:euler}
applies and gives
\begin{equation}\label{eq:prongs-proof}
 (n-2)+\sum_{i=1}^N(p_i-2)=4g-4.
\end{equation}
Every interior singularity has at least three prongs, so each term $p_i-2$
is positive and hence $n\leq4g-2$.

Since the surgery slope is $p/1$, we have
$\Delta(d/1,p/1)=|d-p|$. If $|d-p|\geq2$,
Proposition~\ref{prop:degeneracy}(3) says that the stable lamination
remains essential after $p$-surgery, which is impossible because $S^3_p(K)$
is a lens space. Hence $|d-p|\leq1$. Since $p=4g-2$ and $d\leq4g-2$,
we have $d\in\{4g-3,4g-2\}$.

We now show that $d=4g-2$, treating the cases $g=1$ and $g\geq2$
separately.

\medskip\noindent\textbf{Case 1:} $g=1$.
Then $2\leq d\leq4g-2=2$, so $d=2=4g-2$.

\medskip\noindent\textbf{Case 2:} $g\geq2$.
Suppose, for contradiction, that $d=4g-3$. Since
$n=k(4g-3)\leq4g-2$ and $k$ is a positive integer, we must have $k=1$.
Hence $n=4g-3$, and \eqref{eq:prongs-proof} gives
$\sum_{i=1}^N(p_i-2)=1$. Each summand is a positive integer, so there is
exactly one interior singularity, and it is three-pronged. A pseudo-Anosov
homeomorphism permutes the singularities of its invariant foliations;
since there is only one interior singularity, it must be fixed by $\psi$.
This contradicts Theorem~\ref{thm:Ni}, because the singularity lies in
$\Int F$. Therefore $d=4g-2$.

Thus in either case $d=4g-2$. Since $n=kd\leq4g-2$, we have $k=1$
and $n=4g-2$. Substituting this into \eqref{eq:prongs-proof} gives
$\sum_{i=1}^N(p_i-2)=0$, so there are no interior singularities.
Moreover, $n=4g-2$ is even.

Hence, after capping $\partial F$, every singularity of the extended
invariant foliations is even-pronged: there are no interior singularities,
and the capped point is $n$-pronged when $n>2$ and regular when $n=2$.
By Proposition~\ref{prop:alexander}, $\det(I-h_*)=\pm1$, which is odd.
All hypotheses of Proposition~\ref{prop:mod2} are therefore satisfied, so
the invariant foliations extend to orientable foliations on the capped
surface $\Fhat$. By Lemma~\ref{lem:capping}, the induced map
$\psihat:\Fhat\to\Fhat$ is pseudo-Anosov with the same dilatation
$\lambda>1$, and Proposition~\ref{prop:dilatation} gives a real eigenvalue
$\rho=\pm\lambda$ of $\psihat_*:H_1(\Fhat;\R)\to H_1(\Fhat;\R)$.
Since $F$ has one boundary component, collapsing $\partial F$ to the
capped point induces an isomorphism
$q_*:H_1(F;\R)\to H_1(\Fhat;\R)$. The relation
$q\circ\psi=\psihat\circ q$ shows that $\psi_*$ and $\psihat_*$ are
conjugate, so $\rho$ is an eigenvalue of $\psi_*$, and hence also of
$h_*$, since $h$ and $\psi$ are freely isotopic. Thus
$\det(\rho I-h_*)=0$, and \eqref{eq:alexander} gives $\Delta_K(\rho)=0$.
Since $|\rho|=\lambda>1$, the theorem follows.
\end{proof}

We now prove the two corollaries stated in the Introduction.

\begin{proof}[Proof of Corollary~\ref{cor:torus-polynomial}]
Suppose that $S^3_p(K)$ is a lens space for an integer $p$ with
$|p|=2(r-1)(s-1)-2$. After mirroring, as in the proof of
Theorem~\ref{thm:spectral}, we may assume $p>0$. By
Proposition~\ref{prop:fibered}, $K$ is fibered, and so is $T(r,s)$.
Hence equality of their Alexander polynomials implies equality of their
genera, so $g(K)=g(T(r,s))=(r-1)(s-1)/2$. Consequently,
$p=2(r-1)(s-1)-2=4g(K)-2$. Theorem~\ref{thm:spectral} therefore
implies that $\Delta_K(t)$ has a real root $\rho$ with $|\rho|>1$.

On the other hand,
\[
 \Delta_{T(r,s)}(t)=t^{-\frac{(r-1)(s-1)}{2}}
 \frac{(t^{rs}-1)(t-1)}{(t^r-1)(t^s-1)}.
\]
Hence every zero of $\Delta_{T(r,s)}(t)$ is a root of unity. This is a
contradiction.
\end{proof}

\begin{proof}[Proof of Corollary~\ref{cor:BL}]
By Baker's result, any counterexample to the Bleiler--Litherland conjecture
would, after mirroring if necessary, satisfy \eqref{eq:baker}. In
particular, $g(K)=4$, $\Delta_K(t)=\Delta_{T(3,5)}(t)$, and
$p=14=2(3-1)(5-1)-2=4g(K)-2$.
Corollary~\ref{cor:torus-polynomial} excludes this case. Hence no
counterexample exists.
\end{proof}

\section{Characterizing slopes}

\begin{proof}[Proof of Corollary~\ref{cor:characterizing}]
For the three torus knots under consideration, the relevant data are
summarized below:
\[
\begin{array}{c|c|c|c}
 J & g(J) & p & S^3_p(J)\\ \hline
 T(3,4) & 3 & 11=4g(J)-1 & L(11,9)\\
 T(3,5) & 4 & 14=4g(J)-2 & L(14,9)\\
 T(3,7) & 6 & 22=4g(J)-2 & L(22,9)
\end{array}
\]
Suppose that $S^3_p(K)\cong S^3_p(J)$ orientation-preservingly for one
of the three rows above. By the oriented classification of lens spaces
\cite{Brody}, none of the target lens spaces is orientation-preservingly
homeomorphic to $L(p,1)$. Since $p$-surgery on the unknot is $L(p,1)$,
the knot $K$ is nontrivial. Since positive $p$-surgery on $K$ is an
L-space, $K$ is an L-space knot and hence is prime by Krcatovich's
theorem \cite[Theorem~1.2]{Krcatovich}. Thus $K$ is a torus knot, a
satellite knot, or a hyperbolic knot. We treat the three slopes separately.

\medskip\noindent\textbf{Case 1:} $p=11$.
The hyperbolic case is excluded by Corollary~\ref{cor:BL}, and the
satellite case by Theorem~\ref{thm:satellite}. Thus $K$ is a torus knot.
Moser's theorem gives $ab=10$ or $12$, so $K$ is either $T(5,2)$ or
$T(4,3)$. Since $S^3_{11}(T(5,2))\cong L(11,4)$, which is not
orientation-preservingly homeomorphic to the target $L(11,9)$ because
$4\not\equiv9^{\pm1}\pmod{11}$, we obtain $K=T(4,3)=T(3,4)$.

\medskip\noindent\textbf{Case 2:} $p=14$.
The hyperbolic case is excluded by Corollary~\ref{cor:BL}, and the
satellite case by Theorem~\ref{thm:satellite}. Thus $K$ is a torus knot.
Moser's theorem gives $ab=13$ or $15$. Since $a,b>1$, the case $ab=13$
is impossible, while $ab=15$ gives $K=T(5,3)=T(3,5)$.

\medskip\noindent\textbf{Case 3:} $p=22$.
The satellite case is excluded by Theorem~\ref{thm:satellite}. If $K$
is a torus knot, Moser's theorem gives $ab=21$ or $23$. Since $a,b>1$,
the case $ab=23$ is impossible, while $ab=21$ gives
$K=T(7,3)=T(3,7)$. It remains to exclude the hyperbolic case. Greene's
realization theorem \cite[Theorem~1.3]{Greene} provides a knot $B$ from
Berge's list such that $S^3_{22}(B)\cong S^3_{22}(K)\cong L(22,9)$
and $\widehat{HFK}(B)\cong\widehat{HFK}(K)$. We may therefore use
Ichihara--Saito's result \cite{IchiharaSaito}. In Ichihara--Saito's
convention, $q=5$ or $9$ \cite{Brody}, and
\cite[Table~1]{IchiharaSaito} gives $B=T(7,3)$ in either case.
The knot Floer homology isomorphism gives $g(K)=g(T(3,7))=6$ and
$\Delta_K(t)=\Delta_{T(3,7)}(t)=t^{-6}\Phi_{21}(t)$.
Since $22=4g(K)-2$, Theorem~\ref{thm:spectral} forces a real root of
$\Delta_K$ outside the unit circle, whereas every root of $\Phi_{21}$
lies on the unit circle. This contradiction excludes the hyperbolic case
and completes the proof.
\end{proof}

\section*{Acknowledgments}
The author thanks Steven Sivek for pointing out an issue with the argument
in an earlier version of Lemma~2.7 and for helpful correspondence.

\section*{AI-use disclosure}
The author used AI tools to polish the language and improve sentence fluency.
The author takes full responsibility for the entire content of this manuscript.

\end{document}